\documentclass{article}

\usepackage{xcolor}
\usepackage{titlesec}
\titleformat{\section}[block]{\Large\bfseries\filcenter}{\thesection}{1em}{}
\titleformat{\subsection}[hang]{\bfseries}{}{1em}{}
\setcounter{section}{-1}

\usepackage{amssymb}
\usepackage{amsfonts}
\usepackage{ucs}
\usepackage[utf8x]{inputenc}
\usepackage[english]{babel}
\usepackage[pdftex]{graphicx}
\usepackage{cancel}
\usepackage{amsmath}
\usepackage{amsthm}

\usepackage{indentfirst}
\setlength{\parindent}{4em}
\setlength{\parskip}{0.3em}

\usepackage{hyperref}
\hypersetup{
	bookmarks=true,         % show bookmarks bar?
	unicode=false,          % non-Latin characters in Acrobat’s bookmarks
	pdftoolbar=true,        % show Acrobat’s toolbar?
	pdfmenubar=true,        % show Acrobat’s menu?
	pdffitwindow=false,     % window fit to page when opened
	pdfstartview={FitH},    % fits the width of the page to the window
	pdftitle={Finding Nonlinear Production - Consumption Equilibrium},    % title
	pdfauthor={Roman A.Polyak},     % author
	pdfsubject={NPCE},   % subject of the document
	pdfcreator={},   % creator of the document
	pdfproducer={}, % producer of the document
	pdfkeywords={}, % list of keywords
	pdfnewwindow=true,      % links in new window
	colorlinks=true,       % false: boxed links; true: colored links
	linkcolor=blue,          % color of internal links (change box color with linkbordercolor)
	citecolor=green,        % color of links to bibliography
	filecolor=red,      % color of file links
	urlcolor=blue           % color of external links
}
\addtolength{\oddsidemargin}{-.875in}
\addtolength{\evensidemargin}{-.875in}
\addtolength{\textwidth}{1.75in}
\addtolength{\topmargin}{-.875in}
\addtolength{\textheight}{1.75in}

\newcommand{\norm}[1]{\left\lVert#1\right\rVert}
\newcommand{\inner}[1]{\left<#1\right>}
\newtheorem{theorem}{Theorem}
\newtheorem{lemma}{Lemma}

\numberwithin{equation}{section}

% Here: H option for float placement
\usepackage{float}

% caption and subcaption work together

% opening
\title{Finding Nonlinear Production - Consumption Equilibrium}
\author{Roman A.Polyak\\George Mason University\\Fairfax VA 22030, USA\\\href{mailto:rpolyak@gmu.adu}{rpolyak@gmu.edu}}

\DeclareMathOperator{\Rset}{\mathbb{R}}
\DeclareMathOperator*{\Amax}{Argmax}
\DeclareMathOperator*{\Amin}{Argmin}
\DeclareMathOperator{\Lagr}{\mathcal{L}}

\begin{document}
	
	\selectlanguage{english}
	\graphicspath{{/}}
	\maketitle
	\nocite{Antipin, Auslender, Bakushinskiy, Censor, Dantzig, Goldstein, Kantorovich, Konnov, Koopmans, Korpelevich, Kuhn, Lancaster, Leontief, Levitin, Morishima, Polyak_res, Polyak_IO, Primak, Rosen}
\section*{Abstract}
	We introduce and study nonlinear production - consumption equilibrium (NPCE). The NPCE is a combination and generalization of both classical linear programming (LP) and classical input-output (IO) models. 
	\par In contrast to LP and IO the NPCE has both production and consumption components. Moreover, the production cost, the consumption and the factors (resources) availability are not fixed. Instead they are corespondent functions of the production output, prices of goods and prices of factors. 
	\par At the NPCE the total production cost reaches its minimum, while the total consumption, without factors expenses, reaches its maximum. At the same time the production cost is consistent with the production output, the consumption is consistent with the prices for goods and the factors availability is consistent with prices for factors. 
	\par Finding NPCE is equivalent to solving a variational inequality (VI) with a particular nonlinear operator and a simple feasible set $\Omega=\Rset_+^n\times\Rset_+^n\times\Rset_+^m$.
	\par Under natural assumptions on the production, consumption and factor operators the NPCE exists and it is unique.
	\par Projection on the feasible set $\Omega$ is a law cost operation, therefore for solving the VI we use two projection methods.
	\par Each of them requires at each step few matrix by vector multiplications and allows, along with convergence and convergence rate, establish complexity bound. 
	\par The methods decompose the problem, so both the primal and the dual variables are computed simultaneously. 
	\par On the other hand, both methods are pricing mechanisms for establishing NPCE, which is a generalization of Walras - Wald equilibrium (see \cite{Kuhn}) in few directions.  

\section{Introduction}
	For several decades linear programming (LP), introduced by Leonid V.Kantorovich before WW2, was the main instrument for optimal resources allocation (see \cite{Dantzig,Kantorovich,Koopmans} and references therein).
	\par In 1975 L.V.Kantorovich and T.C.Koopmans shared the Nobel Prize in Economics "for their contribution to the theory of optimal allocation limited resources".
	\par Also before the WW2 Wassily W.Leontief introduced the input-output (IO) model (see \cite{Leontief}). The model has been widely used for planning production, analysis economic activities, international trade, just to mention a few. 
	\par W.Leontief received the Nobel Prize in economics in 1973 "for development the input-output model and for its application to important economic problems". For quite some time LP and IO models were used independently. 
	\par Systematic effort to combine them has been undertaken by Michio Morishima more then 50 years ago. His Walras - Leontief's (WL) model consists of three systems of equations  (see \cite{Morishima}).
	\par The first system requires balance between supply and demand for each product. The second system requires balance between availability and the required consumption of each factor. The third system requires balance between the price for each produced item and total expenses required for its production. 

	\par The solution of the systems defines the WL equilibrium. Unfortunately, the assumptions on the supply and demand functions, under which WL equilibrium exists, is hard to justify from the economic standpoint. According  to M.Morishima "there is no basis for the supply and demand functions to satisfy the assumptions assumed". 
	\par The NPCE is a generalization of Leontief-Walras model (see \cite{Primak}). The assumptions, under which the NPCE exists, are justifiable from the economic stand point.
	\par The NPCE model has two parts: the production and the consumption. The production part transforms factors (resources) into products. The consumption part uses the production output to maximize the consumption. 
	\par We start by describing the building blocks of the NPCE.  
	
	\begin{enumerate}
		\item Let $A:\Rset^n\rightarrow\Rset^n$ be the balance matrix with elements $a_{ij}$, which show how much of product $1\leq i\leq n$ needed to produce one item of product $1\leq j\leq n $. We assume that $A$ has no zero rows and columns. For consumption vector $y$ we have 
		\begin{equation*}
			y=x-Ax=(I-A)x,
		\end{equation*}

	where $x=(x_1,\dots,x_n)^T$ is the production vector with components $x_j$ - the number of items of product $1\leq j\leq n$ produced. We assume that the economy is productive. It means that for any consumption vector $c\in\Rset_+^n$ the following system
	\begin{equation}\label{eq:0.1}
		(x-Ax)=c, \;\;\;\;\;\; x\in\Rset_+^n
	\end{equation}
	has a solutions. In such case matrix A is called productive matrix, (see \cite{Gale, Lancaster}).
	\par Let $\lambda$ be the price vector, i.e, $\lambda_j$ defines the price of one item of product $1\leq j\leq n$, then $q=(I-A)^T\lambda$ is the profit vector, i.e. $q_j,\;1\leq j\leq n$ is the profit out of one item of product $1\leq j\leq n$, that is
	\begin{equation}\label{eq:0.2}
		q_j=\left((I-A)^T\lambda\right)_j,\;1\leq j\leq n,\;\lambda\in\Rset_+^n.
	\end{equation}
	\par Solution of system (\ref{eq:0.1}) defines such production vector $x_c$, which guarantee consumption given by $c\in\Rset_+^n$.
	\par Solution of system (\ref{eq:0.2}) defines such price vector $\lambda_q\in\Rset_+^n$, which guarantee profit given by $q\in\Rset_+^n$.
	\par For a productive matrix $A$ the inverse $(I-A)^{-1}$ exists and it is a non-negative matrix, so both systems (\ref{eq:0.1}) and (\ref{eq:0.2}) have a unique non-negative solutions $x_c$ and $\lambda_q$ (see, for example, \cite{Gale} and \cite{Lancaster}). The productivity assumption we will keep throughout the paper. 
	\item 
	Let $B: \Rset^n\rightarrow R^m$ be the technological matrix. The elements $b_{ij}$ show how much of factor $1\leq i\leq m$ needed to produce one item of product $1\leq j\leq n$. We assume that matrix $B$ has no zero row and zero column. 
	\par 	The fundamental difference between LP, IO and NPCE is: instead of fixed production cost vector $p$, consumption vector $c$ and availability vector r we used correspondent operators.
	\item First, the fixed production cost vector $p\in\Rset_+^n$ is replaced by production operator $p:\Rset_+^n\rightarrow\Rset_{+}^n$, which maps the output $x=(x_1,\dots,x_n)^T$ into production per item cost vector $p(x)=\left(p_1(x),\dots,p_n(x)\right)^T$.
	\item Second, the fixed consumption vector $c\in\Rset_+^n$ is replaced by consumption operator $c: \Rset_+^n \rightarrow \Rset_{+}^n$, which maps the price for goods vector $\lambda=\left(\lambda_1,\dots.\lambda_n\right)^T \in\Rset_+^n$ into consumption vector\\ $c(\lambda)=\left(c_1(\lambda),\dots,c_n(\lambda)\right)^T$
	\item Third, the fixed resource vector $r\in\Rset_+^m$ is replaced by resource operator $r:\Rset_+^m\rightarrow\Rset_+^m$ which maps the price per unit factor vector $v=(v_1,\dots,v_m)^T$ into availability vector $r(v)=\left(r_1(v),\dots,r_m(v)\right)^T$.
	\end{enumerate}

\section{Nonlinear Production-Consumption Equilibrium}
The NPCE is defined as a triple $y^*=\left(x^*,\lambda^*,v^*\right)\in\Omega$: 
\begin{equation}	\label{eq:1.1}
	\begin{aligned}
		x^*\in\Amin\left\{\left<p(x^*),X\right>\Big\vert (I-A)X\geq c(\lambda^*),BX\leq r(v^*), X\in\Rset_+^n\right\},
	\end{aligned}
\end{equation}
where $X$ - production vector;
\begin{equation}	\label{eq:1.2}
	\begin{aligned}
		(\lambda^*,v^*)\in\Amax\Big\{\left<c(\lambda^*),\Lambda\right>-\left<r(v^*),V\right>\Big\vert
		(I-A)^T\Lambda-B^T V\leq p(x^*),\Lambda \in \Rset_+^n, V\in \Rset_+^m \Big\}
	\end{aligned},
\end{equation}
where $\Lambda$ - consumption price vector and $V$ - factors price vector.
\par In other words the NPCE defines such production vector $x^*\in \Rset_+^n$, consumption price vector $\lambda^*$ and factor price vector $v^*\in\Rset_+^m$, that the total production cost $\left(p\left(x^*\right), x^*\right)$ reaches its minimum, while the consumption without factors cost reaches its maximum. 
\par Moreover, the production cost per unit vector $p\left(x^*\right)$ is consistent with production output $x^*$. The consumption $c\left(\lambda^*\right)$ is consistent with the consumption prices $\lambda^*$. The factor availability $r\left(v^*\right)$ is consistent with factors price vector $v^*$.
\par Keeping in mind that under fixed $y^*=\left(x^*,\lambda^*, v^*\right)$ problems (\ref{eq:1.1}) and (\ref{eq:1.2}) are just primal and dual LP, we obtain
\begin{equation}\label{eq:1.3}
	\left<c(\lambda^*),\lambda^*\right> = \left<p(x^*), x^*\right>+\left<r(v^*),v^*\right>.
\end{equation}
\par Thus, the total consumption is equal to the sum of the production and the factors costs.
\par The NPCE is a generalization of Leontief-Walras equilibrium (see \cite{Primak}). It combines nonlinear equilibrium for resource allocation \cite{Polyak_res} with nonlinear input-output equilibrium \cite{Polyak_IO}.
\par From the complementarity conditions for all $1\leq i\leq m$ and all $1\leq j\leq n$ follows 
	\begin{equation}\label{eq:1.4}
		\begin{aligned}
				x_j^*>0&\Rightarrow p_j(x^*)+\left(B^Tv^*\right)_j-\left((I-A)^T\lambda^*\right)_j=0\\
			x_j^*=0&\Leftarrow p_j(x^*)+\left(B^Tv^*\right)_j-\left((I-A)^T\lambda^*\right)_j>0,
		\end{aligned}
	\end{equation}
	\begin{equation}\label{eq:1.5}
	\begin{aligned}
			\lambda_j^*>0&\Rightarrow\left((I-A)x^*\right)_j-c_j(\lambda^*)=0\\
		\lambda_j^*=0&\Leftarrow\left((I-A)x^*\right)_j-c_j(\lambda^*)>0,\\
	\end{aligned}
\end{equation}
	\begin{equation}\label{eq:1.6}
	\begin{aligned}
			v_i^*>0&\Rightarrow r_i(v^*)-\left(Bx^*\right)_i=0\\
		v_i^*=0&\Leftarrow r_i(v^*)-\left(Bx^*\right)_i>0.
	\end{aligned}
\end{equation}

\par In other words, the NPCE leads to the market clearing. Thus, the main issue at this point is existence of NPCE and its uniqueness.

\section{Existence of NPCE}
\par We start with conditions on operators $p, c$ and $r$, which guarantee existence and uniqueness of NPCE.
\par Let $p:\Rset_+^n\rightarrow\Rset_+^n$ be strongly monotone increasing operator, that is there exist $\alpha>0$, that for any $x_1$ and $x_2$ from $\Rset_+^n$ follows 
\begin{equation}\label{eq:2.1}
	\left<p(x_1)-p(x_2), x_1-x_2\right>\geq \alpha \norm{x_1-x_2}^2,
\end{equation}
where $\norm{x}=\left<x,x\right>^{1/2}$ is the Euclidean norm. 
\par Assumption (\ref{eq:2.1}), in particular, implies: the increase of production of any good, when the production of the rest is fixed leads to the cost per item increases. Moreover, the lower bound of the margin is $\alpha>0$.
\par Let $c:\Rset_+^n\rightarrow\Rset_+^n$ be strongly monotone decreasing operator that is there exists $\beta>0$, that for any $\lambda_1$ and $\lambda_2$ from $\Rset_+^n$ follows
\begin{equation}\label{eq:2.2}
	\left<c(\lambda_1)-c(\lambda_2),\lambda_1-\lambda_2\right>\leq -\beta\norm{\lambda_1-\lambda_2}^2.
\end{equation}
\par Assumption (\ref{eq:2.2}), in particular, implies: price increase for an item of any given product, when prices for the rest are fixed, leads to the consumption decrease of the product. Moreover, the upper bound of the margin is $-\beta<0$. 
\par Finally, let $r:\Rset_+^m\rightarrow \Rset_+^m$ be strongly monotone increasing operator, then there exists $\gamma>0$ that for any $v_1$ and $v_2$ from $\Rset_+^m$ follows
\begin{equation}\label{eq:2.3}
	\left<r(v_1)-r(v_2), v_1-v_2\right>\geq \gamma \norm{v_1-v_2}^2.
\end{equation}
\par Assumption (\ref{eq:2.3}), in particular, implies: the increase of the price for any given factor, when the prices for the rest are fixed, leads to availability increase of the factor. Moreover, the lower bound of the margin is $\gamma>0$.
\par We start by showing that finding $y^*$ from (\ref{eq:1.1})-(\ref{eq:1.2}) is equivalent to solving the following variation inequality (VI)
\begin{equation}\label{eq:2.4}
	\inner{g(y^*), y-y^*}\leq 0,\; \forall y\in \Omega,
\end{equation}
where operator $g:\Rset^{2n+m}\rightarrow\Rset^{2n+m}$ is defined as follows 
\begin{equation}\label{eq:2.5}
	\begin{aligned}
		g(y)=\Big((I-A)^T\lambda-p(x)-B^Tv;c(\lambda)-(I-A)x;Bx-r(v)\Big).
	\end{aligned}
\end{equation}
\begin{theorem}\label{thr:1}
	For $y^*=\left(x^*, \lambda^*, v^*\right)$ to be a solution of the dual LP pair (\ref{eq:1.1}) and (\ref{eq:1.2}) it is necessary and sufficient for $y^*$ to be a solution of the VI (\ref{eq:2.4}) with operator $g$ given by (\ref{eq:2.5}).
\end{theorem}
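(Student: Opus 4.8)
The plan is to route both sides of the equivalence through one common set of conditions---the Karush--Kuhn--Tucker (KKT) system of a frozen linear program---and then observe that each side reduces to exactly that system. The crucial preliminary remark, already flagged in the text before (\ref{eq:1.3}), is that once the triple $y^*=(x^*,\lambda^*,v^*)$ is held fixed, the operator values $p(x^*)$, $c(\lambda^*)$, $r(v^*)$ are constant vectors; with these frozen, (\ref{eq:1.1}) is an ordinary primal LP in the variable $X$ with objective coefficient $p(x^*)$, and (\ref{eq:1.2}) is precisely its LP dual in $(\Lambda,V)$, with right-hand side $p(x^*)$ and objective coefficients $c(\lambda^*)$, $r(v^*)$. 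I would state this frozen-pair identification first, since it is the structural fact that makes the theorem possible.

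First I would invoke the LP optimality theorem for this frozen pair: the tuple $\bigl(x^*;(\lambda^*,v^*)\bigr)$ is simultaneously primal- and dual-optimal if and only if it is primal-feasible, dual-feasible, and satisfies complementary slackness. Since $y^*$ is assumed to lie in the $\Amin$/$\Amax$ sets, optimal solutions exist, so strong duality applies and this characterization is unconditional (no constraint qualification is needed for LP). Writing the three groups out componentwise against the constraints of (\ref{eq:1.1}) and (\ref{eq:1.2}) reproduces exactly (\ref{eq:1.4})--(\ref{eq:1.6}): pairing each primal variable $x_j^*$ with its dual-constraint slack $p_j(x^*)+(B^Tv^*)_j-((I-A)^T\lambda^*)_j$ gives (\ref{eq:1.4}); pairing each dual variable $\lambda_j^*$ with the primal slack $((I-A)x^*)_j-c_j(\lambda^*)$ gives (\ref{eq:1.5}); and pairing each $v_i^*$ with the primal slack $r_i(v^*)-(Bx^*)_i$ gives (\ref{eq:1.6}). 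Thus the left-hand side of the theorem is equivalent to the KKT system (\ref{eq:1.4})--(\ref{eq:1.6}) together with primal and dual feasibility.

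Next I would treat the VI side independently. Because the feasible set is the orthant $\Omega=\Rset_+^{2n+m}$, the inequality $\inner{g(y^*),y-y^*}\leq 0$ for all $y\in\Omega$ is equivalent to the complementarity conditions $g(y^*)\leq 0$, $y^*\geq 0$, $\inner{g(y^*),y^*}=0$. The standard verification is short: testing with $y=y^*+e_k$ forces each coordinate $g_k(y^*)\leq 0$; testing with $y=0$ forces $\inner{g(y^*),y^*}\geq 0$; and since each summand $g_k(y^*)y_k^*$ is then nonpositive, their vanishing sum forces each to vanish, while the converse is immediate from $g(y^*)\leq 0$ and $y\geq 0$. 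I would then substitute the three blocks of (\ref{eq:2.5}): the $x$-block $g\leq 0$ is dual feasibility and its complementarity is (\ref{eq:1.4}); the $\lambda$-block is primal feasibility $(I-A)x^*\geq c(\lambda^*)$ with complementarity (\ref{eq:1.5}); the $v$-block is $Bx^*\leq r(v^*)$ with complementarity (\ref{eq:1.6}). Hence the VI reduces to the very same KKT system, and the theorem follows by transitivity.

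There is no deep obstacle here; the content is a faithful translation, and the only real care required is the bookkeeping of signs---checking that the convention built into (\ref{eq:2.5}) of $g$ (profit minus production-and-factor cost in the first block, consumption demand minus supply in the second, factor use minus availability in the third) lines up with the direction of the LP inequalities so that $g(y^*)\leq 0$ encodes feasibility in the correct sense and the products $g_k(y^*)y_k^*=0$ reproduce (\ref{eq:1.4})--(\ref{eq:1.6}) rather than their negatives. I would keep a single master table of the three blocks so that both reductions are seen to land on identical conditions.
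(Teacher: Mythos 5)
Your proposal is correct and follows essentially the same route as the paper: both reduce each side of the equivalence to the common system ``$g(y^*)\leq 0$, $y^*\geq 0$, $\inner{g(y^*),y^*}=0$'' (primal/dual feasibility plus complementary slackness of the frozen LP pair), with your KKT formulation being just a repackaging of the paper's Lagrangian saddle-point argument and its direct verification that a VI over the nonnegative orthant is a complementarity problem. Your version is somewhat more symmetric in how it treats the two directions, but there is no substantive difference in the underlying ideas.
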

\begin{proof}

	Let consider the Lagrangian for LP (\ref{eq:1.1})
	\begin{equation*}
		\Lagr(y^*,X,\Lambda,V)=\left<p(x^*),X\right>-\left<\Lambda,(I-A)X-c(\lambda^*)\right>-\left<V,-BX+r(v^*)\right>.
	\end{equation*}
Then, 
	\begin{equation}\label{eq:2.6}
	y^*\in\Amin_{\substack{\;\;\;\;\;\;\;\;X\in\Rset_+^n}}\max_{\substack{\Lambda\in\Rset_+^n\\V\in\Rset_+^m}} \Lagr(y^*;X,\Lambda,V).
   \end{equation}
In other words, 
$$x^*\in\Amin\left\{\Lagr\left(y^*; X, \lambda^*, v^*\right)\Big\vert X\in\Rset_+^n\right\}$$
$$=\Amin\left\{\inner{p(x^*)-(I-A)^T\lambda^*+B^T\lambda^*,X}\Big\vert X\in\Rset_+^n\right\}$$
$$=\Amax\left\{\inner{(I-A)^T\lambda^*-p(x^*)-B^T\lambda^*,X}\Big\vert X\in\Rset_+^n\right\}.$$
Therefore, 
\begin{equation}\label{eq:2.7}
	(I-A)^T\lambda^*-p(x^*)-B^T\lambda^*\leq 0.
\end{equation} 
On the other hand, from (\ref{eq:1.2}) follows 
$$(\lambda^*,v^*)\in\Amax\left\{\Lagr\left(y^*;x^*,\Lambda,V\right)\Big\vert \Lambda\in\Rset_+^n, V\in\Rset_+^m\right\}$$
$$=\Amax\left\{\inner{c(\lambda^*)-(I-A)x^*,\Lambda}+\inner{Bx^*-r(v^*),V}\Big\vert\Lambda\in\Rset_+^n, V\in\Rset_+^m\right\}.$$
Therefore, 
\begin{equation}\label{eq:2.8}
	c(\lambda^*)-(I-A)x^*\leq 0,\;\;\;\;\; Bx^*-r(v^*)\leq 0.
\end{equation}
The complementarity condition for the dual LP pair (\ref{eq:1.1}) and (\ref{eq:1.2}) is given by 
\begin{equation}\label{eq:2.9}
	\inner{y^*,g(y^*)}=0.
\end{equation}
From (\ref{eq:2.7})-(\ref{eq:2.9}) follows (\ref{eq:2.4}).\\
Conversely, let us assume existence $\bar{y}\in\Omega$, that 
$$\inner{g(\bar{y}),y-\bar{y}}\leq 0,\;\;\;\;\forall y\in\Omega,$$
then
\begin{equation}\label{eq:2.10}
	\inner{g(\bar{y}),y}\leq \left<g\left(\bar{y}\right),\bar{y}\right>,\;\;\;\;\forall y\in\Omega.
\end{equation}

From ($\ref{eq:2.10}$) follows $g(\bar{y})\leq 0$, otherwise the left hand side can be made as large as one wants by taking correspondent component of $y$ large enough, while the right hand side of (\ref{eq:2.10}) is fixed. Therefore, 
\begin{align}
	\label{eq:2.11}
	(I-A)^T\bar\lambda-p(\bar x)-B^T\bar{\lambda}\leq 0,\;\;\;\;&\bar{x}\geq0,\\
	\label{eq:2.12}
	c\left(\bar\lambda\right)-(I-A)\bar{x}\leq 0,\;\;\;\;&\bar{\lambda}\geq 0,\\
	\label{eq:2.13}
	B\bar{x}-r\left(\bar v\right)\leq 0,\;\;\;\;&\bar v\geq 0.
\end{align}
From (\ref{eq:2.11}) follows that $\bar x$ is a feasible solution of the primal LP (\ref{eq:1.1}) with $y^*$ replaced by $\bar y$.\\
From (\ref{eq:2.12})-(\ref{eq:2.13}) follows that $\left(\bar \lambda, \bar v\right)$ is a feasible solution of the dual (\ref{eq:1.2}) with $y^*$ replaced by $\bar y$.\\
Also for $y=0$ from (\ref{eq:2.10}) follows
$$\left<g(\bar y),\bar y\right>\geq 0,$$
which together with (\ref{eq:2.11})-(\ref{eq:2.13}) leads to 
\begin{equation}\label {eq:2.14}
	\left<g(\bar y),\bar{y}\right>= 0
\end{equation}
Hence, $\bar{x}$ is a feasible solution of the primal LP
\begin{equation}\label{eq:2.15}
	\left<p(\bar x),\bar x\right>= \min\left\{\left<p(\bar x),X\right>\Big\vert (I-A)X\geq c\left(\bar{\lambda}\right);-BX\geq -r(\bar v); X\in \Rset_+^n\right\}
\end{equation}
and $\left<\bar \lambda,\bar v\right>$ is feasible solution of the dual LP
\begin{equation}\label{eq:2.16}
	\left<c(\bar\lambda), \bar\lambda\right>-\left<r(\bar v), \bar v\right>=\max \left\{\left<c(\bar\lambda),\Lambda\right>-\left<r(\bar v), V\right>\Big\vert (I-A)^T\Lambda\leq B^TV+p(\bar x), \Lambda\in\Rset_+^n, V\in\Rset_+^m\right\}.
\end{equation}
Thus, for the primal feasible vector $\bar x$ and the dual feasible vector $\left(\bar\lambda;\bar v\right)$ the complementarity conditions satisfied, therefore $\bar x$ solves the primal LP (\ref{eq:2.15}),  $\left(\bar\lambda;\bar v\right)$  solves the dual LP (\ref{eq:2.16}) and $\bar{y}=y^*$
	\end{proof}
Finding a saddle point (\ref{eq:2.6}) of the Lagrangian $\Lagr\left(y^*,X,\Lambda,V\right)$ is equivalent to finding an equilibrium of two person concave game with following payoff functions.\\
 For the first player the payoff function is 
\begin{equation*}
	\varphi_1 (y;X, \lambda,v) =-\Lagr (y;X,\lambda, v) = \left<-p(x)+(I-A)\lambda -B^Tv, X\right>+\left<v,r(v)\right>
\end{equation*}
with strategy $X\in\Rset_+^n$\\
For the second player the payoff function is 
\begin{equation*}
	\varphi_2(y;x,\Lambda,V)=\Lagr (y;x,\Lambda, V)=\left<c(\lambda)-(I-A)x,\Lambda\right>+\left<Bx-r(v),V\right>+\left<p(x),x\right>
\end{equation*}
with the strategy $(\Lambda,V)$.\\
Now let us consider the normalized payoff function $\Phi: \Omega\time\Omega\rightarrow\Rset$, given by the following formula
$$\Phi\left(y,Y\right)=\varphi_1(y;X,\lambda,v)+\varphi_2(y;x,\Lambda,V).$$
We also consider the following map
\begin{equation}\label{eq:2.17}
	y\rightarrow \omega(y)=\Amax\left\{\Phi\left(y,Y\right)\Big\vert Y\in\Omega\right\}.
\end{equation}
Then, finding $y^*=\left(x^*,\lambda^*,v^*\right)$ is equivalent to finding a fixed point of the map $y\rightarrow \omega(y)$.\\
The normalized payoff function $\Phi\left(y,Y\right)$, however,  is linear in $Y$ and $\Omega=\Rset_+^n\times\Rset_+^n\times\Rset_+^m$ is an unbounded set, therefore, one can't use the map (\ref{eq:2.17}) for proving existence and uniqueness of $y^*$.\\
We will do it later by using assumption (\ref{eq:2.1}) - (\ref{eq:2.3}).
\par Meanwhile, we would line to point out that the nonlinear operator $g$ defined by (\ref{eq:2.5}) is, in fact, the pseudo-gradient of $\Phi\left(y,Y\right)$, i.e. 
\begin{equation*}
	g(y)=\nabla_Y \Phi\left(y,Y\right)\Big\vert_{Y=y}=\Big((I-A)^T\lambda-p(x)-B^Tv;c(\lambda)-(I-A)x;Bx-r(v)\Big).
\end{equation*}
\par The following Lemma establishes sufficient condition for pseudo-gradient $g:\Omega\rightarrow \Rset_+^{2n+m}$ to be strongly monotone.
\begin{lemma}\label{lm:1}
	If operators $p,c$ and $r$ strongly monotone, i.e. (\ref{eq:2.1})-(\ref{eq:2.3}) hold, then the pseudo - gradient $g:\Omega\rightarrow\Rset^{2n+m}$ given by (\ref{eq:2.5}) is also strongly monotone, i.e. for $\delta=\min\{\alpha,\beta,\gamma\}>0$, and any $y_1$ and $y_2$ from $\Omega$, we have 
	\begin{equation}\label{eq:2.18}
		\left<g(y_1)-g(y_2),y_1-y_2\right>\leq-\delta\norm{y_1-y_2}^2.
	\end{equation}

\end{lemma}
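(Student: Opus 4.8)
The plan is to reduce the pairing $\inner{g(y_1)-g(y_2),y_1-y_2}$ to exactly the three scalar quantities appearing in the monotonicity hypotheses (\ref{eq:2.1})--(\ref{eq:2.3}), after showing that everything else cancels. First I would write $y_1=(x_1,\lambda_1,v_1)$, $y_2=(x_2,\lambda_2,v_2)$ and introduce the differences $\Delta x=x_1-x_2$, $\Delta\lambda=\lambda_1-\lambda_2$, $\Delta v=v_1-v_2$. By (\ref{eq:2.5}) the three blocks of $g(y_1)-g(y_2)$ are $(I-A)^T\Delta\lambda-(p(x_1)-p(x_2))-B^T\Delta v$, then $(c(\lambda_1)-c(\lambda_2))-(I-A)\Delta x$, and finally $B\Delta x-(r(v_1)-r(v_2))$.

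Next I would expand $\inner{g(y_1)-g(y_2),y_1-y_2}$ as the sum of the inner products of these three blocks against $\Delta x$, $\Delta\lambda$, and $\Delta v$ respectively, and track the bilinear coupling terms. Using the adjoint identities $\inner{(I-A)^T\Delta\lambda,\Delta x}=\inner{(I-A)\Delta x,\Delta\lambda}$ and $\inner{B^T\Delta v,\Delta x}=\inner{B\Delta x,\Delta v}$, the term $+\inner{(I-A)\Delta x,\Delta\lambda}$ from the first block is cancelled by $-\inner{(I-A)\Delta x,\Delta\lambda}$ from the second, while $-\inner{B\Delta x,\Delta v}$ from the first block cancels $+\inner{B\Delta x,\Delta v}$ from the third. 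This is precisely the skew-symmetry of the off-diagonal coupling built into $g$, and it eliminates every term containing $A$ or $B$.

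After the cancellation only the three diagonal terms remain:
\begin{equation*}
\inner{g(y_1)-g(y_2),y_1-y_2}=-\inner{p(x_1)-p(x_2),\Delta x}+\inner{c(\lambda_1)-c(\lambda_2),\Delta\lambda}-\inner{r(v_1)-r(v_2),\Delta v}.
\end{equation*}
Now I would invoke the hypotheses: (\ref{eq:2.1}) gives $\inner{p(x_1)-p(x_2),\Delta x}\geq\alpha\norm{\Delta x}^2$, (\ref{eq:2.2}) gives $\inner{c(\lambda_1)-c(\lambda_2),\Delta\lambda}\leq-\beta\norm{\Delta\lambda}^2$, and (\ref{eq:2.3}) gives $\inner{r(v_1)-r(v_2),\Delta v}\geq\gamma\norm{\Delta v}^2$. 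Substituting, the right-hand side is at most $-\alpha\norm{\Delta x}^2-\beta\norm{\Delta\lambda}^2-\gamma\norm{\Delta v}^2$. Replacing each coefficient by $\delta=\min\{\alpha,\beta,\gamma\}$ and using $\norm{y_1-y_2}^2=\norm{\Delta x}^2+\norm{\Delta\lambda}^2+\norm{\Delta v}^2$ then yields (\ref{eq:2.18}).

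I do not anticipate a genuine obstacle: the argument is essentially bookkeeping, and the only delicate point is the sign accounting in the cancellation of the coupling terms. The one feature worth stating explicitly is that the inequality (\ref{eq:2.18}) carries $-\delta$ rather than $+\delta$; this reflects that $g$ is the pseudo-gradient of a quantity being maximized in $Y$, so the relevant notion of strong monotonicity is the dissipative (negative) one matching the direction of the VI (\ref{eq:2.4}).
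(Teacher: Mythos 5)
Your proposal is correct and follows essentially the same route as the paper's own proof: expand the pairing blockwise, cancel the skew-symmetric coupling terms involving $(I-A)$ and $B$ via the adjoint identities, and apply (\ref{eq:2.1})--(\ref{eq:2.3}) to the three remaining diagonal terms before taking $\delta=\min\{\alpha,\beta,\gamma\}$. No gaps; the closing remark about the sign convention is a nice clarification but not needed for the argument.
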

\begin{proof}
	Let $y_1=(x_1,\lambda_1,v_1)$ and $y_2=(x_2,\lambda_2,v_2)$ be vectors from $\Omega$, then 

\begin{multline*}
	\left<g(y_1)-g(y_2),y_1-y_2\right>=\left<-p(x_1)+(I-A)^T\lambda_1,-B^Tv_1 + p(x_2)-(I-A)^T\lambda_2+B^Tv_2, x_1-x_2\right>+\\
	\left<c(\lambda_1)-(I-A)x_1-c(\lambda_2)+(I-A)x_2,\lambda_1-\lambda_2\right>+\\
	\left<Bx_1-r(v_1)-Bx_2+r(v_2),v_1-v_2\right>=
\end{multline*}
\begin{multline*}
	=-\left<p(x_1)-p(x_2),x_1-x_2\right>+\\\left<(I-A)^T(\lambda_1-\lambda_2), x_1-x_2\right>-
	\left<B^T(v_1-v_2),x_1-x_2\right>\\
	+\left<c(\lambda_1)-c(\lambda_2),\lambda_1-\lambda_2\right>-\left<(I-A)(x_1-x_2),\lambda_1-\lambda_2\right>\\
	+\left<B(x_1-x_2),v_1-v_2\right>-\left<r(v_1)-r(v_2), v_1-v_2\right>\\
	=-\left<p(x_1)-p(x_2),x_1-x_2\right>+\left<c(\lambda_1)-c(\lambda_2),\lambda_1-\lambda_2\right>-\left<r(v_1)-r(v_2),v_1-v_2\right>.
\end{multline*}
From (\ref{eq:2.1})-(\ref{eq:2.3}) follows 
$$\left<g(y_1)-g(y_2),y_1-y_2\right>\leq -\alpha\norm{x_1-x_2}^2-\beta\norm{\lambda_1-\lambda_2}^2-\gamma\norm{v_1-v_2}^2.$$
Therefore, for $\delta=\min\{\alpha,\beta,\gamma\}>0$ we obtain (\ref{eq:2.18}).
\end{proof}
\begin{theorem}\label{thr:2}
	If $p,c$ and $r$ are continuous and strongly monotone operators, that is (\ref{eq:2.1})-(\ref{eq:2.3}) hold, than the NPCE $y^*=(x^*,\lambda^*,v^*)$ exists and it is unique.
\end{theorem}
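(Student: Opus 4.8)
The plan is to reduce the existence and uniqueness of the NPCE to the same question for the variational inequality (\ref{eq:2.4}) and then to cash in on strong monotonicity. By Theorem \ref{thr:1}, $y^*$ is an NPCE if and only if it solves $\inner{g(y^*),y-y^*}\leq 0$ for all $y\in\Omega$, and by Lemma \ref{lm:1} the operator $g$ satisfies (\ref{eq:2.18}); equivalently, $F:=-g$ is strongly monotone with modulus $\delta=\min\{\alpha,\beta,\gamma\}>0$, and $F$ is continuous since $p,c,r$ are continuous and the remaining terms in (\ref{eq:2.5}) are linear. Thus it suffices to show that the VI governed by the continuous strongly monotone operator $F$ on the closed convex set $\Omega=\Rset_+^n\times\Rset_+^n\times\Rset_+^m$ has exactly one solution. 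The entire purpose of the assumptions (\ref{eq:2.1})--(\ref{eq:2.3}) is to supply the coercivity needed to overcome the unboundedness of $\Omega$, which (as noted after (\ref{eq:2.17})) defeats the naive fixed-point approach via the map $\omega$.

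Uniqueness is the easy direction. Suppose $y_1^*$ and $y_2^*$ both solve (\ref{eq:2.4}). Setting $y=y_2^*$ in the inequality for $y_1^*$, setting $y=y_1^*$ in the inequality for $y_2^*$, and adding the two, I obtain $\inner{g(y_1^*)-g(y_2^*),\,y_2^*-y_1^*}\leq 0$, that is $\inner{g(y_1^*)-g(y_2^*),\,y_1^*-y_2^*}\geq 0$. Comparing with (\ref{eq:2.18}) gives $0\leq-\delta\norm{y_1^*-y_2^*}^2$, hence $y_1^*=y_2^*$.

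For existence I would combine the projection characterization with a truncation argument. Since the projection $P_\Omega$ onto the product of nonnegative orthants is the componentwise positive part and therefore nonexpansive, $y^*$ solves (\ref{eq:2.4}) if and only if it is a fixed point of $T(y)=P_\Omega\!\left(y+\rho g(y)\right)$ for any fixed $\rho>0$. As $\Omega$ is unbounded, Brouwer's theorem does not apply to $T$ directly, so I truncate: for $R>0$ let $\Omega_R=\Omega\cap\{\,y:\norm{y}\leq R\,\}$, a nonempty compact convex set. The map $T_R(y)=P_{\Omega_R}\!\left(y+\rho g(y)\right)$ is continuous, so Brouwer's theorem (equivalently, Hartman--Stampacchia) furnishes $y_R\in\Omega_R$ solving the truncated VI $\inner{g(y_R),y-y_R}\leq 0$ for all $y\in\Omega_R$.

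The decisive step, and the main obstacle, is the a priori bound that liberates $y_R$ from the truncation. Fix a reference point $y_0\in\Omega$; strong monotonicity of $F=-g$ yields, for every $y$,
\begin{equation*}
	\inner{-g(y),y-y_0}\geq \delta\norm{y-y_0}^2-\norm{g(y_0)}\,\norm{y-y_0}.
\end{equation*}
For $R\geq\norm{y_0}$ we have $y_0\in\Omega_R$, so taking $y=y_0$ in the truncated VI gives $\inner{g(y_R),y_0-y_R}\leq 0$, i.e. $\inner{-g(y_R),y_R-y_0}\leq 0$; together with the estimate above this forces $\norm{y_R-y_0}\leq\delta^{-1}\norm{g(y_0)}$, a bound independent of $R$. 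Choosing $R$ larger than $\norm{y_0}+\delta^{-1}\norm{g(y_0)}$ we get $\norm{y_R}<R$ strictly, so for any $y\in\Omega$ the point $y_t=(1-t)y_R+ty\in\Omega$ lies in $\Omega_R$ for small $t>0$; inserting it into the truncated VI and dividing by $t$ yields $\inner{g(y_R),y-y_R}\leq 0$ for all $y\in\Omega$. Hence $y_R$ solves the full VI (\ref{eq:2.4}), and by the uniqueness already shown it is the unique NPCE $y^*=(x^*,\lambda^*,v^*)$, which completes the argument.
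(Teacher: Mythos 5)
Your proof is correct and follows essentially the same route as the paper: truncate the unbounded $\Omega$ to a compact convex piece, obtain a solution of the truncated problem by a fixed-point theorem, and then use the strong monotonicity of $g$ to derive the $R$-independent bound $\norm{y_R-y_0}\leq\delta^{-1}\norm{g(y_0)}$ so that the truncation is inactive for large radius, with uniqueness by the standard two-inequality argument against (\ref{eq:2.18}). The only difference is that you invoke Brouwer (equivalently Hartman--Stampacchia) on the single-valued projection map, whereas the paper applies Kakutani's theorem to the set-valued argmax map $\omega_M$ of the normalized payoff $\Phi$; your variant is slightly more elementary and also makes explicit, via the segment argument, the step "the inactive constraint can be dropped," which the paper only asserts.
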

\begin{proof}
	Let consider $y_0\in\Rset_{++}^{2n+m}$  such that $\norm{y_0}\leq 1$ and large enough number $M>0$. We  replace $\Omega$ by $\Omega_M=\left\{y\in\Omega:\norm{y-y_0}\leq M\right\}$.
	\par Then, $\Omega_M$ is a bounded, convex and closed set and so is the set
	\begin{equation}\label{eq:2.19}
		\omega_M(y)=\Amax\left\{\Phi\left(y,Y\right) \vert Y\in\Omega_M\right\},
	\end{equation}
	because for any given $y\in\Omega$ function $\Phi\left(y,Y\right)$ is linear in $Y$.
	\par Moreover, due to the continuity of $p,c$ and $r$ the map $y\rightarrow\omega_M(y)$ is upper semi-continuous. 
	\par Therefore, from the Kakutani's theorem follows existence of $y_M^*\in\Omega_M: y_M^*\in\omega\left(y_M^*\right)$ (see, for example, \cite{Lancaster}).

\par 
It turns out, the constraint $\norm{y-y_0}\leq M$ is irrelevant in (\ref{eq:2.19}). In fact, from (\ref{eq:2.18}) with $y_1=y_0$ and $y_2=y_M^*$ follows 
\begin{equation}\label{eq:2:20}
	\delta\norm{y_0-y_M^*}^2\leq \left<g\left(y_m^*\right)-g\left(y_0\right), y_0-y_M^*\right>=
	\left<g\left(y_M^*\right),y_0-y_M^*\right>+\left<g\left(y_0\right), y_M^*-y_0\right>
\end{equation}
From (\ref{eq:2.19}) and $y_M^*\in\omega\left(y_M^*\right)$ follows $\left(g\left(y_M^*\right),y_0-y_M^*\right)\leq 0$, therefore from (\ref{eq:2:20}) we obtain 
$$\delta\norm{y_0-y_M^*}^2\leq \norm{g(y_0)}\norm{y_M^*-y_0},$$
or
	$$\norm{y_0-y_M^*}\leq \delta^{-1} \norm{g(y_0)}.$$
\par Therefore, $\norm{y_M^*}\leq \norm{y_0}+\norm{y_0-y_M^*}\leq 1+\delta^{-1}\norm{g(y_0)}$. Thus, for large enough $M>0$ the constraint $\norm{y_0-y}\leq M$ can't be active in (\ref{eq:2.19}), therefore it is irrelevant in (\ref{eq:2.19}) and there is $y^*\in\Omega:y^*\in\omega(y^*)$. Uniqueness follows from (\ref{eq:2.18}). In fact, assuming that there are two vectors $y^*$ and $y^{**}$, which solve VI (\ref{eq:2.4}), we obtain 
$$\left<g(y^*), y^{**}-y^*\right>\leq 0 \text{  and  } \left<g(y^{**}), y^*-y^{**}\right>\leq 0,$$
therefore 
$$\left<g\left(y^*\right)-g\left(y^{**}\right), y^{**}-y^*\right>\leq 0$$
or
$$\left<g\left(y^{*}\right)-g\left(y^{**}\right) y^{*}-y^{**}\right>\geq 0,$$
which contradicts (\ref{eq:2.18}) with $y_1=y^*, y_2=y^{**}$. Thus, assumptions (\ref{eq:2.1})-(\ref{eq:2.3}) are sufficient for existence and uniqueness NPCE $y^*=(x^*,\lambda^*,v^*)$.
\end{proof}
\par In what is following we replace the global strong monotonicity assumptions (\ref{eq:2.1}) - (\ref{eq:2.3}) for strong monotonicity only at the NPCE $y^*=(x^*,\lambda^*,v^*)$, i.e we assume existence of $\alpha>0, \beta>0$ and $\gamma>0$, that 
\begin{equation}\label{eq:2.21}
	\left<p(x)-p(x^*),x-x^*\right>\geq \alpha\norm{x-x^*}^2,\;\;\;\forall x\in\Rset_+^n
\end{equation}
\begin{equation}\label{eq:2.22}
	\left<c(\lambda)-c(\lambda^*),\lambda-\lambda^*\right>\leq -\beta \norm{\lambda-\lambda^*}^2,\;\;\;\forall\lambda\in\Rset_+^n
\end{equation}
\begin{equation}\label{eq:2.23}
	\left<r(v)-r(v^*),v-v^*\right>\geq \gamma\norm{v-v^*}^2,\;\;\;\forall v\in \Rset_+^n
\end{equation}
hold. 
\par In the next section we also assume that operator $p,c$ and $r$ satisfy local Lipschitz condition, i.e there exist $L_p>0, L_c>0$ and $L_r>0$ that 
\begin{equation}\label{eq:2.24}
	\norm{p(x)-p(x^*)}\leq L_p\norm{x-x^*},\;\;\;\forall x\in\Rset_+^n
\end{equation}
\begin{equation}\label{eq:2.25}
	\norm{c(\lambda)-c(\lambda^*)}\leq L_c\norm{\lambda-\lambda^*},\;\;\;\forall \lambda\in\Rset_+^n
\end{equation}
\begin{equation}\label{eq:2.26}
	\norm{r(v)-r(v^*)}\leq L_r\norm{v-v^*},\;\;\;\forall v\in\Rset_+^n
\end{equation}
hold.
\par We will say that operator $p,c$ and $r$ are well defined if conditions (\ref{eq:2.21})-(\ref{eq:2.26}) are satisfied. 
\par Assumption (\ref{eq:2.21}) means that production operator $p$ is sensitive to the change of the production output only at $x^*$.
\par Assumption (\ref{eq:2.22}) means that consumption operator $c$ is sensitive to the consumption price change only at $\lambda^*$. 
\par Finally,  from (\ref{eq:2.23}) follows that operator $r$ is sensitive to the factor price change only at $v^*$.
\par  Conditions (\ref{eq:2.24}) - (\ref{eq:2.26}) mean that production, consumption and factors availability are under control in the neighborhood of equilibrium. 
\section{Pseudo-Gradient Projection Method for finding NPCE}
The projection method for solving optimization problems was introduced in the 60s (see \cite{Goldstein, Levitin, Rosen}).
\par Later projection methods were used for solving VI problems (see, for example, \cite{Antipin,Auslender, Bakushinskiy, Censor, Konnov, Polyak_res, Polyak_IO} and references therein). Often projection methods have only theoretical value because projection on a convex set in $\Rset^n$ is numerically costly operation. In case of simple feasible set, however, projection methods can be efficient (see, for example, \cite{Polyak_res, Polyak_IO}). 
\par From Theorem \ref{thr:1} follows: for finding NPCE $y^*$ it is sufficient to solve VI (\ref{eq:2.4}) with operator $g$ given by (\ref{eq:2.5}). 
\par The feasible set $\Omega$ a very simple and projection on it is a low cost operation. Therefore, for finding $y^*=(x^*,\lambda^*, v^*)$ we use two projection methods. 
\par The projection of $u\in\Rset^q$ on $\Rset_+^q$ is given by the following formula 
$$v=P_{\Rset_+^q}(u)=\left[u\right]_+=\left(\left[u_1\right]_+,\dots,\left[u_q\right]_+\right),$$
where for any $1\leq i\leq q$ we have 
$$\left[u_j\right]_+=\begin{cases}
	u_i\;\;,&u_i\geq 0\\
	0\;\;,&u_i<0
\end{cases}.$$
\par The projection operator $P_\Omega:\Omega\rightarrow\Omega$ is given by formula 
$$P_\Omega=\left(\left[x\right]_+,\left[\lambda\right]_+,\left[v\right]_+\right),$$
where $y=(x,\lambda,v)\in\Rset^{2n+m}$.
\par The following well known properties of operator $P_\Omega$ will be used later. 
\par First, the operator $P_\Omega$ is not expansive, i.e. for any $u_1$ and $u_2$ from $\Omega$ we have 
\begin{equation}\label{eq:3.1}
	\norm{P_\Omega(u_1)-P_\Omega(u_2)}\leq\norm{u_1-u_2}.
\end{equation}
\par Second, vector $y^*$ solves VI (\ref{eq:2.4}) iff for any $t>0$ vector $y^*$ is a fixed point of a map\\ $P_\Omega\left(I+tg\right):\Omega\rightarrow\Omega$, i.e 
\begin{equation}\label{eq:3.2}
	y^*=P_\Omega\left(y^*+tg\left(y^*\right)\right)
\end{equation}
for any $t>0$.
\par We are ready to describe the Pseudo-Gradient Projection (PGP) method for finding NPCE $y^*$.
\par Let $y_0=(x_0,\lambda_0,v_0)\in\Omega$ be the starting point and let $y_s=(x_s,\lambda_s,v_s)\in\Omega$ has been found already.
\par The PGP method finds the next approximation by the following formulas
\begin{equation}\label{eq:3.3}
	y_{s+1}=P_\Omega\left(y_s+tg(y_s)\right).
\end{equation}
The step length $t>0$ we will specify later.
\par The PGP method (\ref{eq:3.3}) updates the primal and dual variables by the following formula
\begin{equation}\label{eq:3.4}
	x_{s+1,j}=\left[x_{s,j}+t\left((I-A)^T\lambda_s-p(x_s)-B^Tv_s\right)_j\right]_+\;\;\;\;1\leq j\leq n
\end{equation}
\begin{equation}\label{eq:3.5}
	\lambda_{s+1,j}=\left[\lambda_{s,j}+t\left(c(\lambda_s)-(I-A)x_s\right)_j\right]_+\;\;\;\;1\leq j\leq n
\end{equation}
\begin{equation}\label{eq:3.6}
	v_{s+1,i}=\Big[v_{s,i}+t\left(Bx_s-r(v_s)\right)_i\Big]_+\;\;\;\;1\leq i\leq m.
\end{equation}
In fact, the PGP is a decomposition method, which computes the primal $x_s$ and the dual $\left(\lambda_s;v_s\right)$ vectors independently.
\par On the other hand, formulas (\ref{eq:3.4})-(\ref{eq:3.6}) is a pricing mechanism for establishing NPCE $y^*$.
\par From (\ref{eq:3.4}) follows: if for any product $1\leq j\leq n$ the market value of one unit $\left[\left(I-A\right)^T\lambda_s\right]_j$ is greater then the sum of the production and factor cost
$\left(p(x_s)+\left(B^Tv_s\right)\right)_j$ then production $(x_s)_j$ has to be increased, while if
$\left[\left(I-A\right)^T\lambda_s\right]_j<\left[p(x_s)+\left(B^Tv_s\right)\right]_j$,  then production $(x_s)_j$ has to be reduced. 
\par From (\ref{eq:3.5}) follows: if for any product $1\leq j\leq n$ the current consumption $c_j\left(\lambda_s\right)$ is greater then the production output $\left[\left(I-A\right)x_s\right]_j$, then the price $\left(\lambda_s\right)_j$ has to be increased, while if $c_j(\lambda_s)<\left[\left(I-A\right)x_s\right]_j$, then the price $\left(\lambda_s\right)_j$ has to be reduced. 
\par Finally, from (\ref{eq:3.6}) follows  : if  for any factor $1\leq i\leq m$ the consumption $\left(Bx_s\right)_i$ is greater then the availability $r_i(v_s)$ of this factor under price $v_s$, then the price $(v_s)_i$ has to be increased, while if 
$\left(Bx_s\right)_i<r_i(v_s)$, then the price $(v_s)_i$ has to be reduced.
\par The PGP method (\ref{eq:3.3}) can be viewed as explicit Euler's projection method for solving the following system of ordinary differential equations 
\begin{equation}\label{eq:3.7}
	\begin{aligned}
		\frac{dx}{dt}&=(I-A)^T\lambda-p(x)-B^Tv\\\\
		\frac{d\lambda}{dt}&=c(\lambda)-(I-A)x\\\\
		\frac{dv}{dt}&=Bx-r(v)
	\end{aligned}
\end{equation}
with $x(0)=x_0,\lambda(0)=\lambda_0$ and $v(0)=v_0$.
For convergence analysis of the PGP method (\ref{eq:3.3}) we need the following Lemma similar to Lemma 
\ref{lm:1}. 
\begin{lemma}\label{lm:2}
	If operators $p,c$ and $r$ are strongly monotone at the equilibrium $y^*=(x^*,\lambda^*,v^*)$,\\ i.e (\ref{eq:2.21}) - (\ref{eq:2.23}) hold, then operator $g:\Omega\rightarrow \Rset_+^{2n+m}$ given by (\ref{eq:2.5}) is strongly monotone at $y^*$, i.e. for $\delta=\min\left\{\alpha,\beta,\gamma\right\}>0$ the following bound
	\begin{equation}\label{eq:3.8}
		\left<g(y)-g(y^*),y-y^*\right>\leq-\delta\norm{y-y*},\;\;\;\forall y\in\Omega
	\end{equation}
	holds.
\end{lemma}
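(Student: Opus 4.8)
The plan is to mirror the computation already carried out for Lemma \ref{lm:1}, but to fix the second argument at the equilibrium point rather than letting it range freely. That is, I would set $y_1 = y = (x,\lambda,v)$ for an arbitrary $y \in \Omega$ and $y_2 = y^* = (x^*,\lambda^*,v^*)$, and expand $\inner{g(y)-g(y^*), y-y^*}$ directly from the definition (\ref{eq:2.5}) of $g$, block by block.

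The key structural observation --- and the reason the local version costs no more than the global one --- is that the cancellation of the bilinear coupling terms is purely algebraic and has nothing to do with where the two points sit. Writing out the three blocks, the linear part of $g$ produces the cross terms $\inner{(I-A)^T(\lambda-\lambda^*),\, x-x^*}$ and $-\inner{(I-A)(x-x^*),\, \lambda-\lambda^*}$, which are equal and opposite by the transpose identity, together with $-\inner{B^T(v-v^*),\, x-x^*}$ and $\inner{B(x-x^*),\, v-v^*}$, which cancel for the same reason. Thus, exactly as in Lemma \ref{lm:1}, one is left with
\begin{equation*}
	\inner{g(y)-g(y^*),\, y-y^*} = -\inner{p(x)-p(x^*),\, x-x^*} + \inner{c(\lambda)-c(\lambda^*),\, \lambda-\lambda^*} - \inner{r(v)-r(v^*),\, v-v^*}.
\end{equation*}

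At this point I would invoke the local strong-monotonicity hypotheses rather than the global ones: apply (\ref{eq:2.21}) to bound the first term by $-\alpha\norm{x-x^*}^2$, (\ref{eq:2.22}) to bound the second by $-\beta\norm{\lambda-\lambda^*}^2$, and (\ref{eq:2.23}) to bound the third by $-\gamma\norm{v-v^*}^2$. Summing and using the orthogonal decomposition $\norm{y-y^*}^2 = \norm{x-x^*}^2 + \norm{\lambda-\lambda^*}^2 + \norm{v-v^*}^2$ gives
\begin{equation*}
	\inner{g(y)-g(y^*),\, y-y^*} \leq -\alpha\norm{x-x^*}^2 - \beta\norm{\lambda-\lambda^*}^2 - \gamma\norm{v-v^*}^2 \leq -\delta\norm{y-y^*}^2,
\end{equation*}
with $\delta = \min\{\alpha,\beta,\gamma\} > 0$, which is the claimed bound (\ref{eq:3.8}).

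I do not expect any genuine obstacle here: the entire argument is a one-sided specialization of Lemma \ref{lm:1}, since the only place the global assumption entered that proof was in the final step where (\ref{eq:2.1})--(\ref{eq:2.3}) were applied, and those are now replaced verbatim by their pointwise counterparts (\ref{eq:2.21})--(\ref{eq:2.23}) evaluated against the fixed point $y^*$. The one thing worth flagging is the exponent in the statement of (\ref{eq:3.8}): to match the homogeneity of both sides in $y-y^*$, the right-hand side should read $-\delta\norm{y-y^*}^2$, i.e.\ the quadratic bound derived above.
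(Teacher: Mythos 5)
Your proof is correct and is exactly what the paper intends: the paper omits the argument, stating only that ``the proof is similar to the proof of Lemma \ref{lm:1},'' and your block-by-block expansion with $y_2=y^*$ and the cancellation of the bilinear coupling terms is precisely that specialization. You are also right to flag that the right-hand side of (\ref{eq:3.8}) should read $-\delta\norm{y-y^*}^2$; the missing exponent is a typo in the statement, as confirmed by the way the bound is used in (\ref{eq:3.13}) and (\ref{eq:5.1}).
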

The proof is similar to proof of Lemma \ref{lm:1}.
\begin{lemma}\label{lm:3}
	If operators $p,c$ and $r$ satisfy local Lipschits condition (\ref{eq:2.24})-(\ref{eq:2.26}) then there is $L>0$ such that the bound 
	\begin{equation}\label{eq:3.9}
		\norm{g(y)-g(y^*)}\leq L\norm{y-y^*}
	\end{equation}
holds.
\end{lemma}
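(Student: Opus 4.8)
The plan is to exploit the fact that the operator $g$ in (\ref{eq:2.5}) is an affine combination of the locally Lipschitz operators $p,c,r$ with the fixed matrices $I-A$ and $B$, so that (\ref{eq:3.9}) follows from the Lipschitz bounds (\ref{eq:2.24})--(\ref{eq:2.26}) together with the submultiplicativity of the matrix operator norm. Writing $y=(x,\lambda,v)$ and $y^*=(x^*,\lambda^*,v^*)$, I would first subtract (\ref{eq:2.5}) evaluated at $y$ and at $y^*$ to display the three blocks of $g(y)-g(y^*)$ explicitly: the first block is $(I-A)^T(\lambda-\lambda^*)-(p(x)-p(x^*))-B^T(v-v^*)$, the second is $(c(\lambda)-c(\lambda^*))-(I-A)(x-x^*)$, and the third is $B(x-x^*)-(r(v)-r(v^*))$. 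Each block is thus a sum of a fixed matrix applied to one coordinate difference and a Lipschitz increment of $p$, $c$, or $r$.

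Second, I would estimate the Euclidean norm of each block by the triangle inequality. For the first block the bound is $\norm{I-A}\,\norm{\lambda-\lambda^*}+L_p\norm{x-x^*}+\norm{B}\,\norm{v-v^*}$, where I use $\norm{(I-A)^T}=\norm{I-A}$, $\norm{B^T}=\norm{B}$, the submultiplicative estimate $\norm{Mu}\le\norm{M}\,\norm{u}$ for the spectral (operator) norm, and the local Lipschitz bound (\ref{eq:2.24}). The second block is bounded by $L_c\norm{\lambda-\lambda^*}+\norm{I-A}\,\norm{x-x^*}$ using (\ref{eq:2.25}), and the third by $\norm{B}\,\norm{x-x^*}+L_r\norm{v-v^*}$ using (\ref{eq:2.26}).

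Third, since the norm on $\Rset^{2n+m}$ satisfies $\norm{y-y^*}^2=\norm{x-x^*}^2+\norm{\lambda-\lambda^*}^2+\norm{v-v^*}^2$, each of $\norm{x-x^*}$, $\norm{\lambda-\lambda^*}$, $\norm{v-v^*}$ is at most $\norm{y-y^*}$, so each block norm is bounded by a constant multiple of $\norm{y-y^*}$. Denoting the three block norms by $N_1,N_2,N_3$ and using $\norm{g(y)-g(y^*)}=\sqrt{N_1^2+N_2^2+N_3^2}\le N_1+N_2+N_3$, I would collect the coefficients to obtain (\ref{eq:3.9}) with the explicit constant $L=L_p+L_c+L_r+2\norm{I-A}+2\norm{B}$.

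I do not anticipate any genuine obstacle here, since the statement is a routine Lipschitz estimate requiring neither the strong monotonicity nor the productivity assumptions. The only points demanding care are transposing the matrix norms correctly and keeping the per-block coefficients straight, so that the constant $L$ is reported honestly rather than merely asserted to exist.
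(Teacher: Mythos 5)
Your proof is correct, and in fact it supplies more than the paper does: the paper omits the proof of Lemma \ref{lm:3} entirely, deferring to a ``similar technique'' in a cited reference, whereas you carry out the routine block decomposition of $g(y)-g(y^*)$, apply the triangle inequality and the spectral-norm bounds $\norm{Mu}\leq\norm{M}\,\norm{u}$ together with (\ref{eq:2.24})--(\ref{eq:2.26}), and extract the explicit constant $L=L_p+L_c+L_r+2\norm{I-A}+2\norm{B}$. The only cosmetic remark is that estimating $\sqrt{N_1^2+N_2^2+N_3^2}$ by $N_1+N_2+N_3$ is slightly lossy; one could instead bound each $N_i$ by its coefficient times $\norm{y-y^*}$ and take the square root of the sum of squared coefficients for a marginally smaller $L$, but your constant is valid and honestly derived, which is exactly what the lemma (and the paper's Remark on the upper bound for $L$) calls for.
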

\par The proof of the Lemma \ref{lm:3} and the upper bound for $L$ can be established  using technique similar to the one used in \cite{Polyak_res}. 
\par \textbf{Remark 1}. We assume that for given $x\in \Rset_+^n$ and $\lambda\in\Rset_+^n$ computing $p(x)$ and $c(\lambda)$ does not requires more then $\mathcal{O}(n^2)$ operations and for a given $v\in \Rset_+^m$
computing $r(v)$ does not requires more then $\mathcal{O}(m^2)$ operations.
\par It is true, for example, if $p(x)=\nabla\left(\frac{1}{2}x^TPx+q^Tx\right)$, where $P$ is symmetric positive semi-defined matrix, $c(\lambda)=\nabla\left(\frac{1}{2}\lambda^TC\lambda+d^T\lambda\right)$, where $C$ is symmetric negative semi-defined matrix in $\Rset_+^n$ and \\$r(v)=\nabla\left(\frac{1}{2}v^TRv+s^Tv\right)$, where $R$ is symmetric positive semi-defined matrix in $\Rset_+^m$.
\par In what is following we assume that for any given $x\in \Rset^n, \lambda\in \Rset^n$ and $v\in \Rset^m$ computing $p(x), c(\lambda), r(v)$ does not require more then $\mathcal{O}(n^2)$ operations, $n>m$.
\par Then, each PGP step (\ref{eq:3.3}) does not need more then $\mathcal{O}(n^2)$ operations as well, because formulas (\ref{eq:3.4}) - (\ref{eq:3.6}) require few matrix by vector multiplication.
\par Let $\varkappa=\gamma L^{-1}$ be the condition number of the VI operator $g$.
\par The following Theorem establishes global Q-linear convergence rate of the PGP method (\ref{eq:3.3}). The proof is similar to the proof of Theorem 3 in \cite{Polyak_res}. We sketch the proof for completeness. 
\begin{theorem}\label{th:3}
	If operators $p,c$ and $r$ are well defined, i.e. conditions (\ref{eq:2.21}) - (\ref{eq:2.26}) holds then:
	\begin{enumerate}
		\item for any $0<t<2\delta L^{-2}$ the PGP method (\ref{eq:3.3}) globally converges to NPCE $y^*=(x^*,\lambda^*,v^*)$ with Q-linear rate with a ratio $0<q(t)=\left(1-2t\delta+t^2L^2\right)^{1/2}<1$, i.e
		\begin{equation}\label{eq:3.10}
			\norm{y_{s+1}-y^*}\leq q(t)\norm{y_s-y^*};
		\end{equation}
		\item for $t=\delta L^{-2}=\Amin\left\{q(t)\vert t>0\right\}$ the following bound holds
		\begin{equation}\label{eq:3.11}
			\norm{y_{s+1}-y^*}\leq (1-\varkappa^2)^{1/2}\norm{y_s-y^*}=q\norm{y-y^*};
		\end{equation}
		\item the PGP complexity is 
		\begin{equation}\label{eq:3.12}
			\text{Comp}\left(PGP\right)=\mathcal{O}\left(n^2\varkappa^{-2}\ln \epsilon^{-1}\right),
		\end{equation}
		where $\epsilon>0$ is the required accuracy. 
	\end{enumerate}
\end{theorem}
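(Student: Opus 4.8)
The plan is to mimic the standard convergence analysis for projection methods, leaning on the two properties of $P_\Omega$ recorded in (\ref{eq:3.1})-(\ref{eq:3.2}) together with Lemmas \ref{lm:2} and \ref{lm:3}. First I would write the error of one PGP step using the fixed-point identity (\ref{eq:3.2}), which guarantees $y^*=P_\Omega(y^*+tg(y^*))$ for every $t>0$. Subtracting this from the update (\ref{eq:3.3}) and invoking the non-expansiveness (\ref{eq:3.1}) of $P_\Omega$ yields
$$\norm{y_{s+1}-y^*}^2\leq\norm{(y_s-y^*)+t(g(y_s)-g(y^*))}^2.$$
Expanding the right-hand side produces three terms: $\norm{y_s-y^*}^2$, the cross term $2t\inner{g(y_s)-g(y^*),y_s-y^*}$, and $t^2\norm{g(y_s)-g(y^*)}^2$.

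The second step applies the two lemmas to the last two terms. Strong monotonicity at $y^*$ (Lemma \ref{lm:2}, bound (\ref{eq:3.8})) bounds the cross term by $-2t\delta\norm{y_s-y^*}^2$, and the local Lipschitz bound (Lemma \ref{lm:3}, (\ref{eq:3.9})) bounds the quadratic term by $t^2L^2\norm{y_s-y^*}^2$. Collecting,
$$\norm{y_{s+1}-y^*}^2\leq(1-2t\delta+t^2L^2)\norm{y_s-y^*}^2,$$
and taking square roots produces exactly (\ref{eq:3.10}) with $q(t)=(1-2t\delta+t^2L^2)^{1/2}$. For part 1 I would then analyze the scalar function $q(t)$: the inequality $q(t)<1$ is equivalent to $t^2L^2<2t\delta$, i.e.\ to $t<2\delta L^{-2}$, while positivity $q(t)>0$ follows from the elementary fact that $\delta\leq L$ (a consequence of (\ref{eq:3.8}) and (\ref{eq:3.9}) via Cauchy--Schwarz), which makes the discriminant $4(\delta^2-L^2)$ of the quadratic nonpositive, so that $1-2t\delta+t^2L^2$ never changes sign.

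For part 2, I would minimize $q(t)$ over $t>0$. Since $q(t)^2$ is a convex parabola in $t$, setting its derivative $-2\delta+2tL^2$ to zero gives the optimal step $t=\delta L^{-2}$, and substituting back yields $q^2=1-\delta^2L^{-2}=1-\varkappa^2$, which is (\ref{eq:3.11}). For part 3 I would iterate the contraction (\ref{eq:3.10}) to obtain $\norm{y_s-y^*}\leq q^s\norm{y_0-y^*}$, so that accuracy $\epsilon$ is reached once $q^s\leq\epsilon$, i.e.\ after $s=\mathcal{O}(\ln\epsilon^{-1}/\ln q^{-1})$ steps; using $\ln q^{-1}=-\tfrac12\ln(1-\varkappa^2)\sim\tfrac12\varkappa^2$ gives $s=\mathcal{O}(\varkappa^{-2}\ln\epsilon^{-1})$, and multiplying by the $\mathcal{O}(n^2)$ per-iteration cost noted in Remark 1 yields the complexity bound (\ref{eq:3.12}). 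I expect no single deep obstacle here: the argument is the routine ``non-expansive projection plus strong monotonicity plus Lipschitz'' contraction scheme. The only points requiring genuine care are justifying $\delta\leq L$ so that $q(t)>0$ (ensuring $q(t)$ is a bona fide contraction ratio rather than possibly vanishing), and the logarithmic estimate $-\ln(1-\varkappa^2)\sim\varkappa^2$ that converts the linear rate into the stated $\varkappa^{-2}$ dependence in (\ref{eq:3.12}).
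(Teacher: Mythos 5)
Your argument is correct and follows essentially the same route as the paper: the fixed-point identity (\ref{eq:3.2}) plus non-expansiveness (\ref{eq:3.1}), then Lemma \ref{lm:2} for the cross term and Lemma \ref{lm:3} for the quadratic term, yielding $q(t)=(1-2t\delta+t^2L^2)^{1/2}$ and the same complexity count via $\ln q^{-1}\geq\tfrac12\varkappa^2$. The only additions beyond the paper's sketch are your explicit justification that $\delta\leq L$ (hence $q(t)>0$) and the explicit minimization giving $t=\delta L^{-2}$, both of which are sound.
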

\begin{proof}
	From (\ref{eq:3.1}) - (\ref{eq:3.3}) follows 
	\begin{multline}\label{eq:3.13}
		\norm{y_{s+1}-y^*}^2=\norm{P_\Omega\left(y_s+tg(y_s)\right)-P_\Omega\left(y^*+tg(y^*)\right)}^2\leq \norm{y_s+tg(y_s)-y^*-tg(y^*)}^2\leq\\
		=\norm{y_s-y^*}^2+2t\left(y_s-y^*, g(y_s)-g(y^*)\right)+t^2\norm{g(y_s)-g(y^*)}^2.
	\end{multline}
\par From Lemma \ref{lm:2}, Lemma \ref{lm:3} and ({\ref{eq:3.13}}) follows 
$$\norm{y_{s+1}-y^*}\leq \norm{y_s-y^*}\left(1-2t\delta+t^2L^2\right)^{1/2},$$
therefore for $0<t<2\delta L^{-2}$ we have $0<q(t)=\left(1-2t\delta+t^2L^2\right)^{1/2}<1$.
\par Let $0<\epsilon \ll 1$ is the required accuracy. In view of (\ref{eq:3.11}) it takes $\mathcal{O}\left(\left(\ln q\right)^{-1}\ln \epsilon\right)$ steps to find \\$\epsilon$-approximation for NPCE $y^*$. Each step requires $\mathcal{O}(n^2)$ operations. Therefore, it takes
\begin{equation}\label{eq:3.14}
	N=\mathcal{O}\left(n^2\frac{\ln\epsilon^{-1}}{\ln q^{-1}}\right)
\end{equation} 
operations for finding an $\epsilon$-approximation for NPCE $y^*$. 
\par From (\ref{eq:3.11}) follows $\left(\ln q^{-1}\right)^{-1}=\left(-\frac{1}{2}\ln(1-\varkappa^2)\right)^{-1}$, then from $\ln (1+x)\geq x,\;\forall x>-1$ follows $\left(\ln q^{-1}\right)\leq 2\varkappa^{-2}$, which together with (\ref{eq:3.14}) leads to the bound (\ref{eq:3.12}).
\end{proof}
\par If $\delta=\min\left\{\alpha,\beta,\gamma\right\}=0$, then Theorem \ref{th:3} does not guarantee even convergence of the PGP method. Therefore, we need another tool for finding \textbf{NPCE} $y^*$.
\par In the following section we apply Extra Pseudo-Gradient (EPG) method for solving VI (\ref{eq:2.4}).
\par The Extra Gradient (EG) method for finding saddle points was introduced by G.Korpelevich in the 70s (see \cite{Korpelevich}). Since then the EG method became a popular tool for solving VI (see \cite{Antipin, Censor, Konnov, Polyak_res, Polyak_IO} and references therein).
\par We assume that operators $p,c$ and $r$ are monotone and satisfy Lipschitz condition, so is the operator $g$, i.e. for any $y_1$ and $y_2$ from $\Omega$ we have 
\begin{equation}\label{eq:3.15}
	\left<g(y_1)-g(y_2),y_1-y_2\right>\leq 0
\end{equation}
and there exists $L>0$, that 
\begin{equation}\label{eq:3.16}
	\norm{g(y_1)-g(y_2)}\leq L\norm{y_1-y_2}.
\end{equation}
\section{Extra Pseudo-Gradient method for finding NPCE.}
\par Application of G.Korpelevich's extra-gradient method \cite{Korpelevich} for solving VI (\ref{eq:2.4}) leads to two phases extra pseudo-gradient (EPG) method for finding NPCE. 
\par At the prediction phase we predict the production vector $\hat{x}$, the consumption price vector $\hat{\lambda}$, and the factor price vector $\hat{v}$. Then we find the predicted production cost per unit vector $p(\hat{x})$, predicted consumption vector $c(\hat{\lambda})$ and predicted availability factor vector $r(\hat{v})$.
\par At the correction phase we use $p(\hat{x}),c(\hat{\lambda})$ and $r(\hat{v})$ to find the new
production, new consumption and new factor prices.
\par Let $y_0=(x_0,\lambda_0,v_0)\in\Omega$ be the starting point and approximation $y_s=(x_s,\lambda_s,v_s)$ has been found already.
\par At the prediction phase one finds 
\begin{equation}\label{eq:4.1}
	\hat{y}_s=P_\Omega\left(y_s+tg(y_s)\right)=\left[y_s+tg(y_s)\right]_+.
\end{equation} 
\par At the correction phase one finds the new approximation 
\begin{equation}\label{eq:4.2}
	y_{s+1}=P_\Omega\left(y_s+tg(\hat{y}_s)\right)=\left[y_s+tg(\hat{y}_s)\right]_+.
\end{equation} 
\par In other words, the EPG method first predicts the production vector 
\begin{equation}\label{eq:4.3}
	\hat{x}_s=\left[x_s+t\left((I-A)^T\lambda_s-p(x_s)-B^Tv_s\right)\right]_+,
\end{equation}
the consumption price vector 
\begin{equation}\label{eq:4.4}
	\hat{\lambda}_s=\left[\lambda_s+t\left(c(\lambda_s)-(I-A)x_s\right)\right]_+,
\end{equation}
and the factor price vector
\begin{equation}\label{eq:4.5}
	\hat{v}_s=\left[v_s+t\left(Bx_s-r(v_s)\right)\right]_+.
\end{equation}
Then, EPG method finds new approximation for the production vector
\begin{equation}\label{eq:4.6}
	x_{s+1}=\left[x_s+t\left((I-A)^T\hat{\lambda}_s-p(\hat{x}_s)-B^T\hat{\lambda}_s\right)\right],
\end{equation}
as well as, new approximation for the price vector 
\begin{equation}\label{eq:4.7}
	\lambda_{s+1}=\left[\lambda_s+t\left(c(\hat{\lambda}_s)-(I-A)\hat{x}_s\right)\right]_+,
\end{equation}
and the factor price vector
\begin{equation}\label{eq:4.8}
	v_{s+1}=\left[v_s+t\left(B\hat{x}_s-r(\hat{v}_s)\right)\right]_+.
\end{equation}
The meaning of the formula (\ref{eq:4.3})-(\ref{eq:4.5}) and (\ref{eq:4.6})-(\ref{eq:4.8}) is similar to the meaning of formulas (\ref{eq:3.4})-(\ref{eq:3.6}). The step - length $t>0$ we specify in the course of proving the following Theorem.
\par The EPG method (\ref{eq:4.3})-(\ref{eq:4.8}) is a pricing mechanism for finding NPCE, which guarantee convergence under minimum assumptions in the input data. 
\begin{theorem}\label{th:4}
	If $p,c$ and $r$ be monotone operators, which satisfy Lipschitz condition, i.e (\ref{eq:3.15}) and (\ref{eq:3.16}) holds, then for any $0<t<\left(\sqrt{2}L\right)^{-1}$ the EPG method generates a converging sequence $\left\{y_s\right\}_{s=0}^{\infty}$ and $\lim\limits_{s\rightarrow \infty}y_s=y^*$.
\end{theorem}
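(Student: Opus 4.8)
The plan is to run the classical Korpelevich convergence argument, adapted to the sign convention of the present VI (here $g$ plays the role of the negative of a standard monotone operator, so monotonicity reads $\inner{g(y_1)-g(y_2),y_1-y_2}\le 0$ and the VI at a solution reads $\inner{g(y^*),y-y^*}\le 0$). The single analytic engine is the variational characterization of the Euclidean projection onto the convex set $\Omega$: for every $w$ and every $z\in\Omega$,
\[
\inner{w-P_\Omega(w),\,z-P_\Omega(w)}\le 0 .
\]
First I would apply this characterization twice per iteration — once to the correction step $y_{s+1}=P_\Omega(y_s+tg(\hat y_s))$ with the test point $z=y^*$, and once to the prediction step $\hat y_s=P_\Omega(y_s+tg(y_s))$ with the test point $z=y_{s+1}\in\Omega$ — producing two scalar inequalities to be combined. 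Throughout, $y^*$ denotes an arbitrary solution of the VI (\ref{eq:2.4}), whose existence is presupposed by the NPCE framework.

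Next I would expand $\norm{y_{s+1}-y^*}^2$ using the elementary identity $\norm{a-c}^2=\norm{b-c}^2-\norm{a-b}^2+2\inner{a-b,a-c}$ and substitute the two projection inequalities. The VI at the solution, $\inner{g(y^*),\hat y_s-y^*}\le 0$, together with monotonicity (\ref{eq:3.15}) applied to the pair $\hat y_s,y^*$, cancels the cross terms carrying $g$, while the Lipschitz bound (\ref{eq:3.16}) controls the surviving residual term $t\inner{g(\hat y_s)-g(y_s),\,y_{s+1}-\hat y_s}$ through Cauchy--Schwarz and the elementary inequality $2ab\le a^2+b^2$. The target of this computation is the Fej\'er-type descent inequality, valid for every solution $y^*$,
\[
\norm{y_{s+1}-y^*}^2\le \norm{y_s-y^*}^2-\bigl(1-2t^2L^2\bigr)\norm{y_s-\hat y_s}^2 .
\]
The step-length restriction $0<t<\bigl(\sqrt2\,L\bigr)^{-1}$ is precisely what renders the coefficient $1-2t^2L^2$ positive.

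From this inequality the conclusion follows by standard reasoning. Positivity of the coefficient makes $\{\norm{y_s-y^*}\}$ nonincreasing, so $\{y_s\}$ is bounded and, telescoping, $\sum_s\norm{y_s-\hat y_s}^2<\infty$, whence $\norm{y_s-\hat y_s}\to 0$. I would then extract a subsequence $y_{s_k}\to\bar y\in\Omega$; since $\norm{y_s-\hat y_s}\to0$ we also get $\hat y_{s_k}\to\bar y$, and passing to the limit in the prediction formula $\hat y_s=P_\Omega(y_s+tg(y_s))$, using continuity of $g$ and nonexpansiveness (\ref{eq:3.1}) of $P_\Omega$, yields $\bar y=P_\Omega(\bar y+tg(\bar y))$, i.e.\ $\bar y$ solves (\ref{eq:2.4}) by the fixed-point characterization (\ref{eq:3.2}). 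Finally, applying the descent inequality with the particular solution $y^*=\bar y$ makes $\norm{y_s-\bar y}$ nonincreasing with a subsequence tending to $0$, which forces $\norm{y_s-\bar y}\to0$; thus the whole sequence converges to a solution, which under the standing assumptions is the NPCE $y^*$.

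The main obstacle is the middle step: extracting the exact coefficient $1-2t^2L^2$ from the combination of the two projection inequalities with monotonicity and the Lipschitz estimate. The sign bookkeeping is delicate precisely because $g$ is a pseudo-gradient and the VI carries the reversed inequality, so one must be scrupulous about which cross terms cancel by monotonicity and which survive to be dominated by the Lipschitz term — this is exactly where the factor of $2$, and hence the $\sqrt2$ in the step-size bound, originates.
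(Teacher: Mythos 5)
Your proposal is correct and follows essentially the same route as the paper's proof: the two projection variational inequalities (prediction step tested at $y_{s+1}$, correction step tested at $y^*$), cancellation via monotonicity and the VI at the solution, the Lipschitz control of the residual cross term, the Fej\'er-type inequality with coefficient $1-2(tL)^2$, summability of $\norm{y_s-\hat y_s}^2$, and a subsequence-plus-fixed-point argument via (\ref{eq:3.2}) to identify the limit. The only cosmetic differences are that you pass to the limit in the prediction formula rather than the correction formula and you make explicit the final step of re-applying the descent inequality with the limit point as the reference solution.
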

\begin{proof}
	Form (\ref{eq:4.1}) and (\ref{eq:4.2}), non-expansiveness of the operator $P_\Omega$ and Lipschitz condition (\ref{eq:3.16}) follows
	\begin{equation}\label{eq:4.9}
				\norm{y_{s+1}-\hat{y}_s}=\norm{P_\Omega(y_s+tg\left(\hat{y}_s\right))-P_\Omega(y_s+tg\left(y_s\right))}\leq t\norm{g(\hat{y}_s)-g(y_s)}\leq tL\norm{\hat{y}_s-y_s}.
	\end{equation} 
 From (\ref{eq:4.1}) we have 
$$\left<tg(y_s)+(y_s-\hat{y}_s) , y-\hat{y}_s)\right>\leq 0,\;\;\;\forall y\in \Omega.$$
Therefore, for $y=y_{s+1}$ we obtain 
$$\left<tg(y_s)+(y_s-\hat{y}_s) , y_{s+1}-\hat{y}_s)\right>\leq 0,$$
so
\begin{equation}\label{eq:4.10}
	\left<y_s-\hat{y}_s, y_{s+1}-\hat{y}_s\right>+t\left<g(\hat{y}_s), y_{s+1}-\hat{y}_s\right>-t\left<g(\hat{y}_s)-g(y_s), y_{s+1}-y_s\right>\leq 0.
\end{equation}
Then, from (\ref{eq:4.9}) and Lipschitz condition (\ref{eq:3.16}) we obtain 
\begin{equation}\label{eq:4.11}
	t\left<g(\hat{y}_s)-g(y_s), y_{s+1}-\hat{y}_s\right>\leq t\norm{g(\hat{y}_s)-g(y_s)}\norm{y_{s+1}-\hat{y}_s}\leq 
	(tL)^2\norm{\hat{y}_s-y_s}.
\end{equation}
From (\ref{eq:4.10}) and (\ref{eq:4.11}) follows 
\begin{equation}\label{eq:4.12}
	\left<y_s-\hat{y}_s, y_{s+1}-\hat{y}_s\right>+t\left<g(\hat{y}_s), y_{s+1}-\hat{y}_s\right>-(tL)^2\norm{\hat{y}_s-y_s}^2\leq 0.
\end{equation}
From (\ref{eq:4.2}) follows 
\begin{equation}\label{eq:4.13}
	\left<tg(\hat{y}_s)+y_s-y_{s+1}, y-y_{s+1}\right>\leq 0,\;\;\;\forall y\in\Omega.
\end{equation}
Hence, for $y=y^*$ we obtain 
\begin{equation}\label{eq:4.14}
	\left<y_s-y_{s+1}+tg(\hat{y}_s), y^*-y_{s+1}\right>\leq 0.
\end{equation}
From $\left(g(y^*), y-y^*\right)\leq 0, \;\;\;\forall y\in\Omega$ follows
$$\left<g(y^*),\hat{y}_s-y^*\right>\leq 0,$$
so for any $t>0$ we have 
\begin{equation}\label{eq:4.15}
	t\left<-g(y^*), y^*-\hat{y}_s\right>\leq 0.
\end{equation}
By adding (\ref{eq:4.12}), (\ref{eq:4.14}) and (\ref{eq:4.15}) and using monotonisity of operator $g$, i.e. 
$$\left<g(\hat{y}_s)-g(y^*), y^*-\hat{y}_s\right>\geq 0$$
we obtain 
\begin{equation}\label{eq:4.16}
	2\left<y_s-y_{s+1}, y^*-y_{s+1}\right>+2\left<y_s-\hat{y}_s, y_{s+1}-\hat{y}_s\right>-2\left(tL\right)^2\norm{\hat{y}_s-y_s}\leq 0.
\end{equation}
Form the following identity 
\begin{equation}\label{eq:4.17}
	2\left<u-v,w-v\right>=\norm{u-v}^2+\norm{v-w}^2-\norm{u-w}^2
\end{equation}
with $u=y_s, v=y_{s+1}$ and $w=y^*$ follows 
\begin{equation}\label{eq:4.18}
	2\left<y_s-y_{s+1}, y^*-y_{s+1}\right>=\norm{y_s-y_{s+1}}^2+\norm{y_{s+1}-y^*}^2-\norm{y_s-y^*}^2.
\end{equation}
Using identity (\ref{eq:4.12}) with $u=y_s, v=\hat{y}_s$ and $w=y_{s+1}$ we obtain 
\begin{equation}\label{eq:4.19}
	2\left<y_s-\hat{y}_s, y_{s+1}-\hat{y}_s\right>=\norm{y_s-\hat{y}_s}^2+\norm{\hat{y}_s-y_{s+1}}^2-\norm{y_s-y_{s+1}}^2.
\end{equation}
Then, from (\ref{eq:4.16}), (\ref{eq:4.18}) and (\ref{eq:4.19}) follows 
\begin{equation}\label{eq:4.20}
	\norm{y_{s+1}-y^*}^2+\left(1-2(tL)^2\right)\norm{y_s-\hat{y}}^2+\norm{\hat{y}_s-y_{s+1}}^2\leq \norm{y_s-y^*}^2.
\end{equation}
Summing up (\ref{eq:4.20}) from $s=0$ to $s=N$ we obtain 
\begin{equation}\label{eq:4.21}
	\norm{y_{_N+1}-y^*}^2+\left(1-2(tL)^2\right)\sum_{s=0}^{N}\norm{y_s-\hat{y}_s}^2+
	\sum_{s=0}^{N}\norm{\hat{y}_s-y_{s+1}}^2\leq \norm{y_0-y^*}^2.
\end{equation}
From (\ref{eq:4.21}) and $0<t< \left(\sqrt{2}L\right)^{-1}$ we obtain
\begin{equation}\label{eq:4.22}
	\sum_{s=0}^{\infty} \norm{y_s-\hat{y}_s}^2<\infty \;\;\text{and}\;\;\sum_{s=0}^{\infty} \norm{\hat{y}_s-y_{s+1}}^2 <\infty.
\end{equation}
Therefore, 
\begin{equation}\label{eq:4.23}
	\text{(a)}\;\lim\limits_{s\rightarrow\infty}\norm{y_s-\hat{y}_s}=0,\;\;\text{(b)}\; \lim\limits_{s\rightarrow\infty}\norm{y_s-y_{s+1}}=0.
\end{equation}
Also from (\ref{eq:4.20}) and $0<t< \left(\sqrt{2}L\right)^{-1}$ follows 
\begin{equation}\label{eq:4.24}
	\norm{y_{s+1}-y^*}\leq \norm{y_s-y^*}.
\end{equation}
Hence, $\left\{y_s\right\}_{s=0}^\infty$ is a bounded sequence, therefore there is a converging subsequence\\ $\left\{y_{s_k}\right\}_{k=0}^\infty\;:\; \lim\limits_{k\rightarrow\infty}y_{s_k}=\bar{y}$. Then, from (\ref{eq:4.23} (a)) we have $\lim\limits_{k\rightarrow\infty}\hat{y}_{s_k}=\bar{y}$ and from (\ref{eq:4.23}(b)) we obtain $\lim\limits_{k\rightarrow\infty}y_{s_k+1}=\bar{y}$.\\
\par Keeping in mind continuity $g$ we obtain 
$$\bar{y}=\lim\limits_{k\rightarrow\infty}y_{s_k+1}=\lim\limits_{k\rightarrow\infty}\left[y_{s_k}+tg\left(\hat{y}_{s_k}\right)\right]_+=\left[\bar{y}+tg\left(\bar{y}\right)\right]_+.$$
From (\ref{eq:3.2}) follows $\bar{y}=y^*$, which together with (\ref{eq:4.24}) leads to $\lim\limits_{s\rightarrow\infty}y_s=y^*$.
\end{proof}
\section{Convergence rate of the EPG method}
At this point we replace the assumptions (\ref{eq:2.1})-(\ref{eq:2.3}) by less restrictive assumptions (\ref{eq:2.21}) - (\ref{eq:2.23}).
From (\ref{eq:2.21})-(\ref{eq:2.23}) and Lemma \ref{lm:2} follows 
\begin{equation}\label{eq:5.1}
	\left<g(y)-g(y^*), y-y^*\right>\leq -\delta \norm{y-y^*}^2
\end{equation}
with $\delta=\min\{\alpha,\beta,\gamma\}>0$.\\
Therefore, 
$$\left<g(y), y-y^*\right>-\left<g(y^*), y-y^*\right>\leq -\delta\norm{y-y^*}^2,\;\;\;\forall y\in\Omega.$$
Then, $\left(g(y^*), y-y^*)\right)\leq 0,\;\;\;\forall y\in\Omega$, therefore 
\begin{equation}\label{eq:5.2}
	\left<g(y), y-y^*\right>\leq -\delta\norm{y-y^*}^2,\;\;\;\forall y\in\Omega.
\end{equation}
\begin{theorem}\label{th:5}
	If assumptions (\ref{eq:2.21}) - (\ref{eq:2.23}) are satisfied and Lipschitz condition (\ref{eq:3.16}) holds, then for $\nu(t)=1+2\delta t-2(tL)^2$ and ratio $q(t)=1-2\delta t +4(\delta t)^2(\nu(t))^{-1}$ the following statements are taking place:
	\begin{enumerate}
		\item $\norm{y_{s+1}-y^*}\leq\sqrt{q(t)}\norm{y_s-y^*},\;\;0<q(t)<1,\;\forall t\in\left(0,\left(\sqrt{2}L\right)^{-1}\right);$
		\item for $t=(2L)^{-1}$ we have 
		$$q\left(\left(2L\right)^{-1}\right)=(1+\varkappa)(1+2\varkappa)^{-1};$$
		\item for any $\varkappa\in[0,0.5]$ we obtain 
		\begin{equation}\label{eq:5.3}
			\norm{y_{s+1}-y^*}\leq \sqrt{1-0.5\varkappa}\norm{y_s-y^*};
		\end{equation}
	\item 
	\begin{equation}\label{eq:5.4}
		\text{comp} (EPG)\leq \mathcal{O}\left(n^2\varkappa^{-1}\ln\epsilon^{-1}\right),
	\end{equation}
where $\epsilon>0$ is the required accuracy.
	\end{enumerate}

\end{theorem}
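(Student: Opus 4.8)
The plan is to retrace the contraction argument of Theorem~\ref{th:4}, but to feed the strong-monotonicity bound (\ref{eq:5.2}) into the single place where that proof used only plain monotonicity, thereby upgrading summability into a genuine linear rate. The estimates (\ref{eq:4.9})--(\ref{eq:4.14}) are obtained exactly as before, since they rely only on non-expansiveness of $P_\Omega$ and the Lipschitz bound (\ref{eq:3.16}), not on monotonicity. Adding (\ref{eq:4.12}) and (\ref{eq:4.14}) produces the term $t\inner{g(\hat y_s),y^*-\hat y_s}$; where Theorem~\ref{th:4} merely discarded it as nonnegative, I would instead invoke (\ref{eq:5.2}) with $y=\hat y_s$ to get $\inner{g(\hat y_s),y^*-\hat y_s}\geq\delta\norm{\hat y_s-y^*}^2$. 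Carrying this extra term through the identities (\ref{eq:4.18})--(\ref{eq:4.19}) yields the sharpened version of (\ref{eq:4.20}),
\begin{equation*}
	\norm{y_{s+1}-y^*}^2+2\delta t\norm{\hat y_s-y^*}^2+\left(1-2(tL)^2\right)\norm{y_s-\hat y_s}^2+\norm{\hat y_s-y_{s+1}}^2\leq\norm{y_s-y^*}^2 .
\end{equation*}

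Next I would drop the nonnegative term $\norm{\hat y_s-y_{s+1}}^2$ and abbreviate $a=\norm{y_s-y^*}$, $b=\norm{\hat y_s-y^*}$, $c=\norm{y_s-\hat y_s}$, so the inequality reads $\norm{y_{s+1}-y^*}^2\leq a^2-2\delta t\,b^2-(1-2(tL)^2)c^2$. The only geometric fact still needed is the triangle inequality $a\leq b+c$, hence $a^2\leq(b+c)^2$. It therefore suffices to show that, with $\mu=2\delta t-4(\delta t)^2\nu(t)^{-1}$, the quadratic form $2\delta t\,b^2+(1-2(tL)^2)c^2-\mu(b+c)^2$ is nonnegative for all $b,c$; then $\norm{y_{s+1}-y^*}^2\leq a^2-\mu(b+c)^2\leq a^2-\mu a^2=(1-\mu)a^2=q(t)a^2$.

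The heart of the argument — and the step I expect to be the main obstacle — is verifying this nonnegativity with exactly the stated $\mu$. Writing $1-2(tL)^2=\nu(t)-2\delta t$ and computing the three coefficients, one finds $2\delta t-\mu=4(\delta t)^2\nu(t)^{-1}$ and $1-2(tL)^2-\mu=(\nu(t)-2\delta t)^2\nu(t)^{-1}$, and then the discriminant of the form vanishes identically (the product of the diagonal coefficients equals the square of the off-diagonal one, both being $4(\delta t)^2(\nu(t)-2\delta t)^2\nu(t)^{-2}$). Thus the form is a perfect square divided by $\nu(t)>0$, hence nonnegative, and $\mu=2\delta t\,(1-2(tL)^2)\,\nu(t)^{-1}>0$ on $(0,(\sqrt2L)^{-1})$ because there both $1-2(tL)^2>0$ and $\nu(t)>0$. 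This gives statement~1 after a short check that $0<q(t)<1$ (equivalently $0<\mu<1$, the upper bound being immediate from $\nu(t)<1+2\delta t$).

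For the remaining statements I would specialize and estimate. Statement~2 follows by substituting $t=(2L)^{-1}$, so that $\delta t=\tfrac12\varkappa$, $(tL)^2=\tfrac14$ and $\nu=\tfrac12+\varkappa$; a direct simplification collapses $q$ to $(1+\varkappa)(1+2\varkappa)^{-1}$. For statement~3 I would prove the elementary inequality $(1+\varkappa)(1+2\varkappa)^{-1}\leq1-\tfrac12\varkappa$ on $[0,0.5]$, which after clearing the positive denominator is equivalent to $0\leq\varkappa(\tfrac12-\varkappa)$, and take square roots to obtain (\ref{eq:5.3}). Finally, statement~4 is a routine complexity count: iterating (\ref{eq:5.3}) gives $\norm{y_s-y^*}\leq(1-\tfrac12\varkappa)^{s/2}\norm{y_0-y^*}$, so reaching accuracy $\epsilon$ needs $s=\mathcal O\!\big((\ln(1-\tfrac12\varkappa)^{-1})^{-1}\ln\epsilon^{-1}\big)$ steps; the bound $-\ln(1-x)\geq x$ turns this into $s=\mathcal O(\varkappa^{-1}\ln\epsilon^{-1})$, and since each EPG step costs $\mathcal O(n^2)$ (the matrix-by-vector products in (\ref{eq:4.3})--(\ref{eq:4.8}) together with Remark~1), the total is $\mathcal O(n^2\varkappa^{-1}\ln\epsilon^{-1})$, which is (\ref{eq:5.4}).
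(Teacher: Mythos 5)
Your proposal is correct, and its overall architecture is the one the paper uses: rerun the estimates of Theorem~\ref{th:4}, inject the strong-monotonicity bound (\ref{eq:5.2}) at $y=\hat y_s$ in place of plain monotonicity, and arrive at the key inequality
$\norm{y_{s+1}-y^*}^2+\norm{\hat y_s-y_{s+1}}^2+\bigl(1-2(tL)^2\bigr)\norm{y_s-\hat y_s}^2+2\delta t\norm{\hat y_s-y^*}^2\leq\norm{y_s-y^*}^2$,
which is exactly the paper's (\ref{eq:5.7}); your treatment of statements 2--4 coincides with the paper's. The one place you diverge is the extraction of the contraction factor from (\ref{eq:5.7}). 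The paper expands $\norm{\hat y_s-y^*}^2=\norm{\hat y_s-y_s}^2+2\inner{\hat y_s-y_s,\,y_s-y^*}+\norm{y_s-y^*}^2$ and completes the square exactly in the vector quantity $\sqrt{\nu(t)}(\hat y_s-y_s)\pm 2\delta t\,\nu(t)^{-1/2}(y_s-y^*)$, then discards that square; you instead pass to the scalars $a,b,c$, use the triangle inequality $a\leq b+c$, and verify that the quadratic form $2\delta t\,b^2+(1-2(tL)^2)c^2-\mu(b+c)^2$ with $\mu=2\delta t-4(\delta t)^2\nu(t)^{-1}$ has vanishing discriminant and is therefore a nonnegative perfect square over $\nu(t)$. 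The two devices are equivalent here: the optimal $\mu$ from your positive-semidefiniteness condition is precisely $1-q(t)$, so you recover the paper's ratio exactly; your version is arguably easier to check (no sign bookkeeping in the completed square, where the paper in fact has a small sign slip in (\ref{eq:5.8})--(\ref{eq:5.9})), at the cost of the extra triangle-inequality step. Your verifications that $\mu>0$ on $\bigl(0,(\sqrt2 L)^{-1}\bigr)$ and that $q(t)>0$ via $\nu(t)<1+2\delta t$ are also sound.
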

\begin{proof}
	\begin{enumerate}
		\item 

	From (\ref{eq:4.1})-(\ref{eq:4.2}), non-expansiveness of operator $P_\Omega$ and Lipschitz condition (\ref{eq:3.16}) follows 
	$$\norm{\hat{y}_s-y_s}=\norm{P_\Omega(y_s+tg(y_s))-P_\Omega(y_s+tg(\hat{y}_s))}
	\leq t\norm{g(y_s)-g(\hat{y}_s)}\leq tL\norm{y_s-\hat{y}_s}.$$
	Using argument from the proof of Theorem \ref{th:4}, we obtain
	\begin{equation}\label{eq:5.5}
		\left<y_s-y_{s+1}, y^*-y_{s+1}\right>+\left<y_s-\hat{y}_s, y_{s+1}-\hat{y}_s\right>+t\left(g(\hat{y}_s), y^*-\hat{y}_s\right)-(Lt)^2\norm{\hat{y}_s-y_s}^2\leq 0.
	\end{equation}
From (\ref{eq:5.2}) with $y=\hat{y}_s$ we obtain
$$\left<g(\hat{y}_s), y^*-\hat{y}_s\right>\geq \delta \norm{\hat{y}_s-y^*}.$$
Therefore, from (\ref{eq:5.5}) follows 
\begin{equation}\label{eq:5.6}
	2\left<y_s-y_{s+1}, y^*-y_{s+1}\right>+2\left<y_s-\hat{y}_s, y_{s+1}-\hat{y}_s\right>+2\delta t\norm{\hat{y}_s-y^*}^2-2(Lt)^2\norm{\hat{y}_s-y_s}\leq 0.
\end{equation}
By applying identity (\ref{eq:4.17}) to the scalar products in (\ref{eq:5.6}) first with $u=y_s, v=y_{s+1}, w=y^*$ and then with $u=y_s, v=\hat{y}_s$, $w=y_{s+1}$, we obtain
\begin{multline*}
\norm{y_s-y_{s+1}}^2+\norm{y_{s+1}-y^*}^2-\norm{y_s-y^*}^2+\norm{y_s-\hat{y}_s}^2\\+
\norm{\hat{y}_s-y_{s+1}}^2-\norm{y_s-y_{s+1}}^2+2\delta t \norm{\hat{y}_s-y^*}^2
-2(Lt)^2\norm{y_s-\hat{y}_s}^2\leq 0
\end{multline*}
or 
\begin{equation}\label{eq:5.7}
	\norm{y_{s+1}-y^*}^2+\norm{\hat{y}_s-y_{s+1}}^2+\left(1-2(Lt)^2\right)\norm{y_s-\hat{y}_s}^2+2\delta t\norm{\hat{y}_s-y^*}^2\leq \norm{y_s-y^*}^2.
\end{equation}
From 
$$\norm{\hat{y}_s-y^*}^2=\left<\hat{y}_s-y_s+y_s-y^*, \hat{y}_s-y_s+y_s-y^*\right>=
\norm{\hat{y}_s-y_s}^2+2\left<\hat{y}_s-y_s, y_s-y^*\right>+\norm{y_s-y^*}^2$$
and (\ref{eq:5.7}) follows 
\begin{multline}\label{eq:5.8}
	\norm{y_{s+1}-y^*}^2+\norm{\hat{y}_s-y_{s+1}}^2+\left(1+2\delta t-2(Lt)^2\right)\norm{\hat{y}_s-y_s}^2-4\delta t \left<\hat{y}_s-y_s, y_s-y^*\right>\\\leq (1-2\delta t)\norm{y_s-y^*}^2.
\end{multline}
By introducing $\nu(t)=1+2\delta t - 2(Lt)^2$ we can rewrite the third and fourth terms of left hand side of (\ref{eq:5.8}) as follows: 
$$\norm{\sqrt{\nu(t)}(\hat{y}_s-y_s)+2(y_s-y^*)\frac{\delta t}{\sqrt{\nu(t)}}}^2-\frac{4(\delta t)^2 \norm{y_s-y^*}^2}{\nu (t)}.$$
Then, from (\ref{eq:5.8}) follows
\begin{multline}\label{eq:5.9}
 \norm{y_{s+1}-y^*}^2+\norm{\hat{y}_s-y_{s+1}}^2+\norm{\sqrt{\nu(t)}(\hat{y}_s-y_s)+2(y_s-y^*)\frac{\delta t}{\sqrt{\nu(t)}}}^2\\\leq \left(1-2\delta t+\frac{4(\delta t)^2}{\nu(t)}\right)\norm{y_s-y^*}^2.
\end{multline} 

Hence, for $q(t)=1-2\delta t+4(\delta t)^2(\nu(t))^{-1}$ from (\ref{eq:5.9}) follows 
\begin{equation}\label{eq:5.10}
	\norm{y_{s+1}-y^*}^2\leq q(t)\norm{y_s-y^*}^2.
\end{equation} 
and $0<q(t)<1,\;\forall t\in \left(0,\left(\sqrt{2}L\right)^{-1}\right)$.
\item 
For $t=(2L)^{-1}$ and $\varkappa=\delta L^{-1}$ we obtain 
\begin{equation}\label{eq:5.11}
	q\left((2L)^{-1}\right)=1-\varkappa+\varkappa^2(0.5+\varkappa)^{-1}=(1+\varkappa)(1+2\varkappa)^{-1}.
\end{equation}
For any $\varkappa\in [0,0.5]$ we have $(1+\varkappa)(1+2\varkappa)\leq 1-0.5\varkappa$, therefore from (\ref{eq:5.10}) and (\ref{eq:5.11}) follows (\ref{eq:5.3}).
\item 
From (\ref{eq:5.3}), $q=\sqrt{1-0.5\varkappa}, \varkappa\in[0,0.5]$ and given accuracy $0<\epsilon\ll 1$ follows that EPG method requires 
$$s=\mathcal{O}\left(\ln \epsilon^{-1}\left(\ln q^{-1}\right)^{-1}\right)$$
steps to find an approximation with accuracy $\epsilon$. Then,
\begin{equation}\label{eq:5.12}
	\left(\ln q^{-1}\right)^{-1}=\left(-\frac{1}{2}\ln (1-0.5\varkappa)\right)^{-1}.
\end{equation}
From $\ln(1+x)\leq x\;\;\;\forall x>-1$ follows $\ln(1-0.5\varkappa)\leq -0.5\varkappa$, therefore from (\ref{eq:5.12}) we obtain $\left(\ln q^{-1}\right)^{-1}\leq 4\varkappa ^{-1}$. Keeping in mind that each step requires $\mathcal{O}(n^2)$ operations we obtain complexity bound (\ref{eq:5.4}).
	\end{enumerate}
\end{proof}
\par \textbf{Remark 2}. For small $\varkappa>0$ complexity  bound (\ref{eq:5.4}) is much better then bound (\ref{eq:3.12}). Therefore, although EPG requires two projection on $\Omega$ instead of one for PGP, the EPG is still more efficient. For very large NPCE problems the PGP can compete with the EPG.
\section{Concluding remarks}
\par The NPCE is a fundamental departure from both LP and IO models, when it comes to optimal distribution limited resources. It became possible by replacing the fixed production cost, fixed consumption and fixed factors availability with production, consumption and factor operators.
\par It leads to data "symmetrization" and allows to replace finding NPCE by solving a VI with a very simple feasible set $\Omega$, projection on which is a low cost operation. Therefore, for solving VI we use two projection methods PGP and EPG, which do not require solving a QP or a system of linear equations at each step. Instead, they require at each step few matrix by vector multiplication.
\par Moreover, the primal and dual variables one computes independently, in parallel, that is both methods decompose the problem, which allows solving large scale NPCE.
\par On the other hand, both method are nothing, but pricing mechanisms for finding NPCE. 
\par From (\ref{eq:1.3}) follows that at NPCE the total consumption is equal to the sum of production and factors costs.
\par From (\ref{eq:1.4})-(\ref{eq:1.6}) follows that at the NPCE the market clearing takes place.
\par From (\ref{eq:3.12}) and (\ref{eq:5.4}) follows that only three parameters are responsible for complexity of any given NPCE problem: the condition number of the VI operator, the size of the problem and the given accuracy. 
It follows from the complexity bounds that in some instances finding NPCE can be easier than solving LP of the same size. 
\par The numerical results obtained so far support the theoretical findings. The practical use of NPCE, however, will require much more work for developing the input data, numerical realization of the PGP and EPG methods, conducting numerical experiments as well as careful analyzing the results obtained.

	\bibliographystyle{plain} % We choose the "plain" reference style
\bibliography{refs} % Entries are in the refs.bib file
\end{document}